\documentclass[12pt,centertags,oneside,reqno]{amsart}
\usepackage{amsmath,amstext,amsthm,amscd,typearea,hyperref}

\usepackage{pict2e}
\usepackage{xcolor}
\usepackage{needspace}

\usepackage{amssymb}
\usepackage[T1]{fontenc}
\usepackage[utf8]{inputenc}
\usepackage{lmodern}
\usepackage[margin=27mm]{geometry}
\usepackage{amsmath,amssymb,amsthm,mathtools}
\usepackage{mathrsfs}
\usepackage{microtype}
\usepackage{enumitem}
\usepackage{hyperref}
\hypersetup{colorlinks=true,linkcolor=blue,citecolor=blue,urlcolor=blue,
  pdftitle={A rational surface with discrete, non-finitely generated full automorphism group}}
\setlist[enumerate]{label=\textup{(\roman*)},leftmargin=*,itemsep=3pt}
\newtheorem{theorem}{Theorem}[section]
\newtheorem{question}[theorem]{Question}
\newtheorem{lemma}[theorem]{Lemma}
\newtheorem{proposition}[theorem]{Proposition}

\newtheorem{remark}[theorem]{Remark}
\numberwithin{equation}{section}
\newcommand{\CC}{\mathbb C}
\newcommand{\ZZ}{\mathbb Z}
\newcommand{\QQ}{\mathbb Q}

\newcommand{\PP}{\mathbb P}
\DeclareMathOperator{\Aut}{Aut}
\DeclareMathOperator{\Ine}{Ine}
\DeclareMathOperator{\Pic}{Pic}
\DeclareMathOperator{\NS}{NS}
\DeclareMathOperator{\Bl}{Bl}
\DeclareMathOperator{\Km}{Km}

\DeclareMathOperator{\id}{id}
\DeclareMathOperator{\Spec}{Spec}
\newcommand{\Ecal}{\mathcal E}
\newcommand{\Fcal}{\mathcal F}

\title[Non-finitely generated automorphism group]{A rational surface with discrete and non-finitely generated automorphism group}

\author{Tien-Cuong Dinh}
\address{Department of Mathematics, National University of Singapore, 10, 
Lower Kent Ridge Road, Singapore 119076}
\email{matdtc@nus.edu.sg}
\author{Keiji Oguiso}
\address{Mathematical Sciences, the University of Tokyo, Meguro Komaba 3-8-1,
Tokyo, Japan, and National Center for Theoretical Sciences,
Mathematics Division, National Taiwan University,
Taipei, Taiwan}
\email{oguiso@ms.u-tokyo.ac.jp}
\author{Xun Yu}
\address{Center for Applied Mathematics and KL-AAGDM, Tianjin University, Weijin Road 92, Tianjin 300072, P.R. China}
\email{xunyu@tju.edu.cn}
\author{Qi Zhou}
\address{Department of Mathematics, National University of Singapore, 10, 
Lower Kent Ridge Road, Singapore 119076}
\email{e1124859@u.nus.edu}

\begin{document}
\begin{abstract}
We construct a smooth complex projective rational surface whose
automorphism group is discrete and not finitely generated. It is obtained
by blowing up a point on a branch curve of a rational quotient of a
product Kummer surface. Every point with transcendental coordinate gives
such a surface. The proof uses leading jets along the branch curve, the complete family of blowups and the cubic intersection of the family, which are not being considered in relevant earlier works.
\end{abstract}
\maketitle

\section{Introduction}

We work over complex field $\CC$. The aim of this paper is to construct a smooth
projective rational surface whose automorphism group is discrete and not
finitely generated (Theorem~\ref{thm:main}).

In \cite{DO}, the first two authors constructed surfaces with these properties by
blowing up a K3 surface. In \cite{DOY}, the first three authors used a rational
quotient of a product Kummer surface to construct rational surfaces
with infinitely many mutually non-isomorphic real forms. Here we use the same rational quotient
but choose a different center for the final blowup.

We will use a fixed affine coordinate for $\PP^1$ which identifies it with $\CC\cup\{\infty\}$ and denote by $\Bl$ the blowup operator. Consider the rational surface
\[
 T:=\Bl_{\{0,1,2,\infty\}\times\{0,1,3,\infty\}}
          (\PP^1\times\PP^1),
\]
and let $C$ be the strict transform of $\PP^1\times\{\infty\}$.
The natural projection on the first factor $\PP^1$ identifies $C$ with $\PP^1$. Therefore, we can write $x$ for
the resulting coordinate on $C$. Observe that $C^2=-4$.
Denote by $\overline{\QQ}$ the field of algebraic numbers in $\CC$.

\begin{theorem}\label{thm:main}
With the above notation, let $q\in C$ be a point such that $x(q)\in\CC\setminus\overline{\QQ}$.
Then the smooth projective rational surface $X_q:=\Bl_qT$ satisfies
\[
 \Aut^0(X_q)=\{\id_{X_q}\} \qquad \text{and}\qquad
 \qquad \Aut(X_q)\ \text{is not finitely generated}.
\]
\end{theorem}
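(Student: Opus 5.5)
The plan has three parts: first identify $\Aut(X_q)$ with a subgroup of $\Aut(T)$, then read off $\Aut^0$, then reduce non-finite generation to a statement about how that subgroup acts near $C$.

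\medskip
\noindent\emph{Step 1: identifying $\Aut(X_q)$.} Write $E_q\subset X_q$ for the exceptional curve. Every $g\in\Aut(T)$ with $g(q)=q$ lifts to $X_q$. Conversely, if $f\in\Aut(X_q)$ satisfies $f(E_q)=E_q$, then $f$ descends to $T$. So the task is to prove that every automorphism of $X_q$ preserves $E_q$. Then
\[
\Aut(X_q)\cong \Aut(T)_q:=\{g\in\Aut(T)\,:\,g(q)=q\},
\]
and since $C$ is the unique irreducible curve through $q$ with self-intersection $-4$, every such $g$ also preserves $C$.

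For the key claim I would use the whole family $\mathcal X\to C$, $q'\mapsto X_{q'}$, rather than a single member. For a fixed isometry class of $\NS(X_{q'})$, the locus of $q'$ for which some automorphism of $X_{q'}$ realizes it while moving $E_{q'}$ is constructible. It is also defined over $\overline{\QQ}$, because $T$, $C$ and the sixteen blown-up points are defined over $\QQ$. If this locus contained the transcendental point $q$, it would contain a Zariski-open subset of $C$. One would then get a family of isometries moving $E_{q'}$ for generic $q'$. I would try to rule this out with the ``cubic intersection'' of the family, i.e.\ the triple intersection numbers on the threefold $\mathcal X$ (or on $T\times C$ blown up along the graph of $C$). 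These are invariant under fibrewise automorphisms and distinguish $E$ from the other $(-1)$-classes. As a consequence, $\Aut(X_q)$ embeds in $\Aut(T)$, which is discrete.

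\medskip
\noindent\emph{Step 2: $\Aut^0(X_q)=\{\id_{X_q}\}$.} I expect this to be routine. $T$ is the blowup of $\PP^1\times\PP^1$ at sixteen points, four of which lie on each of the lines $\PP^1\times\{\infty\}$ and $\{0\}\times\PP^1$. Hence no nontrivial vector field on $\PP^1\times\PP^1$ vanishes at all of them, so $H^0(T,T_T)=0$ and $\Aut^0(T)$ is trivial. By Step 1, $\Aut^0(X_q)$ is then trivial as well.

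\medskip
\noindent\emph{Step 3: $\Aut(T)_q$ is not finitely generated.} Let $G_C\subset\Aut(T)$ be the stabilizer of $C$. Its image in $\Aut(C)$ is finite, since it preserves the four points $0,1,2,\infty$ where $C$ meets exceptional curves. So, up to finite index, $\Aut(T)_q$ agrees with the subgroup $K\subset G_C$ acting trivially on $C$, and $K$ itself need not be finitely generated. I would analyse $K$ through the \emph{leading jet} along $C$. For $g\in K$, write $g$ in local coordinates $(x,y)$ with $C=\{y=0\}$; the first nonvanishing term of $g-\id$ in powers of $y$ gives a section $\sigma(g)$ of a power of the normal or tangent bundle of $C$. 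The map $\sigma$ should be a homomorphism on a suitable filtration of $K$. Using the elliptic-curve structure of the product Kummer surface, and the fact that $C$ lies in the branch locus so that $K$ is big, I would show the leading jets span an infinite-rank group with denominators. The natural example is a group like $\ZZ[1/n]$, or a non-finitely generated subgroup of $\QQ$-linear combinations of translations. The point $q$ being transcendental is what prevents extra relations beyond those that hold identically in $x$.

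Writing $\Aut(T)_q$ as a strictly increasing union of subgroups then contradicts finite generation. I expect this jet computation, together with Step 1, to be the main obstacle. My first attempt would be explicit: write generators of $K$ as lifts of translations by torsion-type sections of $E_1\times E_2$, and compute their jets along $C$ directly.
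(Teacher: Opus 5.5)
There is a genuine gap in Step 1. You want to prove unconditionally that every automorphism of $X_q$ preserves $E_q$, so that $\Aut(X_q)\cong\Aut(T)_q$, and the tool you propose cannot give this. Passing from the transcendental point to the generic one by constructibility over $\overline{\QQ}$ is a reasonable substitute for the paper's field isomorphism $\bar k\cong\CC$. But what it produces is only an automorphism of $p^{-1}(U)$ for some open $U\subset C$, i.e.\ an element of the generic-fibre group $G$. Such an element is merely a birational self-map of $Y$. It may fail to be biregular along finitely many fibres, and it need not preserve the cubic form on $\Pic(Y)$.

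The cubic-form characterization of $\Ecal$ (Lemma~\ref{lem:Eunique}) applies only to automorphisms of $Y$ over all of $C$. Nothing in your plan forces generic automorphisms to extend across the missing fibres. The paper deliberately avoids your claim (see its closing remark) and argues by contradiction instead:
\begin{enumerate}
\item If $G\cong\Aut(X_q)$ were finitely generated, the union $\mathcal B$ of all bad loci would be finite by \eqref{eq:bad}.
\item Since $\mathcal B(\widetilde n g\widetilde n^{-1})=n|_C(\mathcal B(g))$ for lifts of $n\in\Aut(T,C)$, the set $\mathcal B$ is invariant under $x\mapsto 2x$, $x\mapsto x+1$ and $x\mapsto 2/x$, hence empty.
\item Only then do all generic automorphisms extend to $Y$. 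The cubic form now yields $G=\Aut_C(Y)\cong\Ine(T,C)$, contradicting Proposition~\ref{prop:inertia}.
\end{enumerate}
The finite-generation hypothesis is exactly what makes $\mathcal B$ finite, and your plan is missing this idea.

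Step 3 also rests on a false premise. The stabilizer of $C$ in $\Aut(T)$ does not act on $C$ through a finite group, because $\Aut(T)$ does not preserve the sixteen grid exceptional curves. For example, $f_1$ and $\iota_0\iota_1$ induce $x\mapsto 2x$ and $x\mapsto x+1$ on $C$ (Lemma~\ref{lem:maps}). This infinite action is what the paper exploits, both to empty $\mathcal B$ and to create denominators. Your conclusion $\Aut(T)_q=\Ine(T,C)$ still holds for transcendental $q$, but for a different reason. Every automorphism of $T$ is defined over $\overline{\QQ}$, so a nontrivial induced M\"obius map on $C$ fixes only algebraic points.

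The jet computation is also left open, and it needs three ingredients you do not supply:
\begin{enumerate}
\item A nontrivial inertia element. The paper transports one from Oguiso's infinite-order element of $\Ine(S,E_4)$.
\item A normalization that makes the leading jet a homomorphism into polynomials of bounded degree. The paper gets this by working on the Kummer cover and using $\omega_S$, which gives $J:K\to H^0(\PP^1,T_{\PP^1}^{\otimes(m+1)})$.
\item A way to produce denominators. Finite differences under $x\mapsto x+1$ reach a constant $\alpha$, and conjugation by $x\mapsto 2x$ then gives $\alpha\ZZ[1/2]$.
\end{enumerate}
Transcendence of $q$ plays no role in that computation.

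Step 2 is fine once you decouple it from Step 1: $\Aut^0(X_q)$ acts trivially on $\Pic(X_q)$, and that action is faithful.
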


Here, $\Aut(X_q)$ is the group of automorphisms of $X_q$ and $\Aut^0(X_q)$ is its connected component of the identity map $\id_{X_q}$.

Note that the blowup center in Theorem~\ref{thm:main} lies on the branch curve $C$ while
in \cite{DOY}, the final center lies on an exceptional $(-1)$-curve
of the grid blowup. 

 Let $V$ be a smooth complex projective variety. For any subvariety $D\subset V$, we write
\[
 \begin{aligned}
 \Aut(V,D)&:=\{f\in\Aut(V)\mid f(D)=D\},\\
 \Ine(V,D)&:=\{f\in\Aut(V,D)\mid f|_D=\id_D\}.
 \end{aligned}
\]
We will use later superscript $s$ 
for $\Aut$ and $\Ine$ to 
denote similar groups for symplectic automorphisms of a K3 surface.
Note also that when considering the action on a Picard lattice, we use
$f\mapsto(f^{-1})^*$.

\medskip
\noindent\textbf{Outline of the proof.}
In Section~\ref{sec:kummer}, we recall three automorphisms of the
Kummer surface $S:=\Km(E\times F)$ that descend via the quotient map
$\pi:S\longrightarrow S/\langle\theta\rangle=T$ and induce on $C$ the maps
\[
 x\longmapsto 2x,\qquad x\longmapsto x+1,\qquad
 x\longmapsto 2/x.
\]
A result of the second author \cite{Og19} yields a nontrivial
symplectic automorphism fixing the relevant curve pointwise. Its first
nonzero jet in the normal direction belongs to an additive group of
polynomials. By taking finite differences and conjugating by
$x\mapsto 2x$, we produce a copy of $\ZZ[1/2]$ in the image of the jet
homomorphism. It follows that $\Ine(T,C)$ is not finitely generated.

We then consider the family $Y\to C$ whose fiber over $q$ is $X_q$.
The intersection cubic shows that every automorphism of $Y$ over $C$
preserves the exceptional divisor. Hence $\Aut_C(Y)=\Ine(T,C)$.
If the automorphism group of the geometric generic fiber were finitely
generated, each element of a finite set of generators and its inverse would
fail to extend over at most finitely many parameters. The resulting
finite set would be invariant under the three maps above, which is
impossible unless it is empty. Thus all automorphisms of the geometric
generic fiber would extend over $C$, contradicting the non-finite
generation of $\Ine(T,C)$. Finally, because the family is defined over
$\QQ$, a change of algebraically closed ground field identifies the
geometric generic automorphism group with $\Aut(X_q)$ whenever $x(q)$
is transcendental over $\QQ$.

The inertia group is studied in Section~\ref{sec:inertia}, and the
family $Y\to C$ in Section~\ref{sec:family}. We prove Theorem~\ref{thm:main}
in Section~\ref{sec:proof}.

Although $T$ is defined over $\overline{\QQ}$, the surfaces $X_q$
in Theorem~\ref{thm:main} do not admit models over $\overline{\QQ}$.
This raises the following question.

\begin{question}\label{quest:main2}
Does there exist a smooth projective rational surface defined over
$\overline{\QQ}$ whose automorphism group is discrete and not finitely
generated?
\end{question}

\medskip
\noindent\textbf{Brief history and current status.}
To the best of our knowledge, Barry Mazur~\cite[p.~42, \S17]{Ma}
was the first to raise the question of finite generation of the
component group of the automorphism group of a smooth projective variety:
\begin{question}\label{quest:main}
Is the group $\Aut(V)/\Aut^0(V)$ finitely generated for every smooth projective
variety $V$?
\end{question}
The answer is affirmative when $\dim V=1$ and for several classes of surfaces,  namely, Del Pezzo surfaces as their nef cones are finite rational polyhedral cones, K3 surfaces by Sterk~\cite{St85}, Enriques surfaces by Namikawa~\cite{Na85} and so on. See also Theorem~\ref{thm:current1} (iii) below. Lesieutre~\cite{Le} gave the first negative answer, constructing a smooth projective
sixfold $V$ whose automorphism group is discrete and not finitely
generated. His sixfold is uniruled but not rationally connected; its
construction starts from a rational surface $W$ with a discrete,
finitely generated automorphism group. In the same paper,
Lesieutre~\cite[Remark~3]{Le} suggested blowing up $W$ at a very
general point of a suitable curve to obtain a rational surface with
a discrete, non-finitely generated automorphism group, and explained
the two obstacles to this approach. This appears to be the first
explicit formulation of the question for rational surfaces.
The first two authors  subsequently constructed the first smooth
projective surface with a discrete, non-finitely generated
automorphism group by blowing up a K3 surface \cite{DO}. The results of
\cite{DO,DOY2,DGLOWY} give the following picture.

\begin{theorem}\label{thm:current1}
\begin{enumerate}
\item If $V$ is a smooth projective variety of dimension $d\ge 2$
such that $\Aut(V)/\Aut^0(V)$ is not finitely generated, then its
Kodaira dimension $\kappa(V)$ satisfies $\kappa(V)\le d-2$.
Conversely, for every pair $(d,\kappa)$ with $d\ge 3$ and
$\kappa\le d-2$, there exists a smooth projective variety $V$ of
dimension $d$ and Kodaira dimension $\kappa$ whose automorphism group
is discrete and not finitely generated.

\item For every $d\ge 3$, there exists a smooth projective rational
variety $V$ of dimension $d$ whose automorphism group is discrete and
not finitely generated.

\item If $V$ is a smooth projective surface such that
$\Aut(V)/\Aut^0(V)$ is not finitely generated, then $V$ is rational
or is a non-minimal surface birational to a K3 or Enriques surface.
Conversely, there exist non-minimal surfaces birational to K3 and
Enriques surfaces whose automorphism groups are discrete and not
finitely generated.
\end{enumerate}
\end{theorem}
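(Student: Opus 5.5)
The statement collects results from \cite{DO,DOY2,DGLOWY}, so the plan is to assemble it from those sources, supplying for each part the mechanism that makes it work.

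For the constraint in (i), I would use the Iitaka fibration $\Phi\colon V\dashrightarrow B$, $\dim B=\kappa(V)$, which is $\Aut(V)$-equivariant because $\Aut(V)$ acts on the pluricanonical rings. If $\kappa(V)=d$, then $\Aut(V)$ embeds in the birational automorphism group of a variety of general type, which is finite. If $\kappa(V)=d-1$, then:
\begin{itemize}
\item the image of $\Aut(V)$ in $\mathrm{Bir}(B)$ is finite, since $\Aut(V)$ acts through a finite group on $H^0(mK_V)$ up to scalars (Deligne--Nakamura--Ueno finiteness);
\item the kernel acts on the elliptic general fibre, hence essentially through translations by a finitely generated Mordell--Weil type group, extended by a finite group of automorphisms of the elliptic curve.
\end{itemize}
So $\Aut(V)/\Aut^0(V)$ is finitely generated. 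This contrapositive gives $\kappa(V)\le d-2$. The converse half of (i) and all of (ii) are constructive. For those I would start from the surface examples (K3/Enriques blowups of \cite{DO}, and the rational surface of Theorem~\ref{thm:main}). I would then take products with suitable varieties of prescribed Kodaira dimension, or projective bundles over them, to reach any $d\ge3$ and any $\kappa\le d-2$. The needed check is that the automorphism group of the product splits, for instance because the two factors have no common nonconstant morphisms, so that discreteness and non-finite generation are inherited; this is the content of \cite{DOY2,DGLOWY}.

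For the constraint in (iii), I would use the Enriques--Kodaira classification. When $\kappa(V)\ge0$, the minimal model $V_{\min}$ is unique, and $\Aut(V)$ is the stabiliser in $\Aut(V_{\min})$ of the (iterated) blowup centres. The cases are:
\begin{itemize}
\item For $\kappa=2$ the group is finite.
\item For $\kappa=1$, the case $d-1$ of (i) gives finite generation.
\item For abelian and bielliptic surfaces, finite generation follows from the explicit structure: translations form $\Aut^0$, and the component group is arithmetic. Stabilisers of finite sets inside such groups remain finitely generated.
\item For minimal K3 and Enriques surfaces, finite generation follows from Sterk \cite{St85} and Namikawa \cite{Na85}.
\end{itemize}
Only non-minimal K3/Enriques blowups remain among $\kappa\ge0$. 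For $\kappa=-\infty$ with irrational $V$, $V$ is birationally ruled over a curve of genus $\ge1$, and the ruling is $\Aut$-invariant. The group then fits into an extension of a finitely generated group (automorphisms of the base curve) by a subgroup of $\mathrm{PGL}_2$ of the generic fibre preserving finitely many points, which is finitely generated up to $\Aut^0$. The converse half of (iii) is \cite{DO} together with its Enriques analogue in \cite{DGLOWY}.

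The main obstacle I expect is the ruled-surface case in (iii), and more generally making the ``stabiliser of blowup centres is finitely generated'' step rigorous. For abelian and ruled surfaces the relevant group can be non-discrete, so one has to control the component group rather than the whole group. I would try to use the fact that $\Aut(V)/\Aut^0(V)$ acts on $\NS(V)$ with finite kernel. Its image is then contained in an arithmetic group that stabilises a finite set of classes, and arithmetic groups have finitely generated subgroups of this type. This is the technical heart of \cite{DGLOWY}.
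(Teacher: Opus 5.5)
The paper does not prove this theorem; it only cites \cite{DO,DOY2,DGLOWY}. Where your proposal goes beyond citation, the step you call the technical heart is false.

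Knowing that the image of $\Aut(V)/\Aut^0(V)$ in $O(\NS(V))$ stabilises finitely many classes only places it inside an arithmetic, hence finitely generated, group. Subgroups of finitely generated groups need not be finitely generated, and here they are not. $X_q$ of Theorem~\ref{thm:main} is a counterexample: $\Aut(X_q)$ acts faithfully on $\Pic(X_q)$ (Lemma~\ref{lem:faithful}) and fixes $K_{X_q}$, yet it is not finitely generated. The non-minimal K3 surfaces of \cite{DO} are also counterexamples: their automorphism groups act on $\NS$ with finite kernel and permute the finitely many $(-1)$-curves, yet they are not finitely generated. So your principle would refute the converse half of (iii).

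What is actually needed is finite index in an arithmetic group, which Torelli supplies for minimal K3 and Enriques surfaces \cite{St85,Na85}, or structure specific to each case. For abelian surfaces your claim happens to be true, but for a reason you do not give. After translating one point of the finite set to $0$, the pointwise stabiliser consists of those $\varphi\in\Aut_{\mathrm{grp}}(A)$ with $\varphi-1$ in the left ideal of $\mathrm{End}(A)$ annihilating the remaining points, so it is itself arithmetic.

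Your irrational ruled case also misfires. For a base of genus one, $\Aut(B)$ is not finitely generated. Nothing forces the fibrewise group to fix finitely many points of the generic fibre. There is a short correct argument:
\begin{enumerate}
\item $\Aut(V)$ preserves the ruling $V\to B$, since every rational curve lies in a fibre.
\item A finite-index subgroup fixes every component of the finitely many reducible fibres, so it descends to a relatively minimal model $V_0$.
\item On $V_0$ it fixes $f$ and $K_{V_0}$, which span $\NS(V_0)_{\QQ}$ because $K_{V_0}\cdot f=-2$.
\item Hence $\Aut(V)$ has finite image in $\operatorname{GL}(\NS(V))$. Lieberman--Fujiki finiteness for the stabiliser of an ample class then shows that $\Aut(V)/\Aut^0(V)$ is in fact finite.
\end{enumerate}

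The constructive half fails in dimension $3$. Since $\Aut^0(X\times Y)\supseteq\Aut^0(X)\times\Aut^0(Y)$, a surface times a curve has discrete automorphism group only if the curve has genus $\ge 2$. So no product gives a rational threefold as in (ii) with $d=3$, nor a threefold with $\kappa=0$ as (i) requires for $(d,\kappa)=(3,0)$. Projective bundles do not help: they have $\kappa=-\infty$, and split ones such as $\PP(\mathcal O\oplus\mathcal L)$ carry a fibrewise $\CC^*$-action. A non-split bundle would need a rigidity and discreteness argument you do not supply, so these cases need a genuinely three-dimensional construction.

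Your treatment of $\kappa(V)\ge d-1$ in (i) is sound in outline, with one gap. Lang--N\'eron gives finite generation of the Mordell--Weil group only modulo the $\CC(B)/\CC$-trace. In the isotrivial case (e.g.\ $E\times B$) the trace is positive-dimensional. Your word ``essentially'' hides the argument that automorphisms coming from the trace contribute only finitely many components of $\Aut(V)$.
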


Thus the rational surface case remained the principal gap in this
coarse classification.

\medskip
\noindent\textbf{Relation to the real form problem.}
The question of finite generation of $\Aut(V)/\Aut^0(V)$ has a close
parallel in the study of real forms. In particular, the results of
\cite{Le,DO,DOY,DGLOWY} yield the following statement, analogous to
Theorem~\ref{thm:current1}.

\begin{theorem}\label{thm:current2}
\begin{enumerate}
\item If a smooth projective variety $V$ of dimension $d\ge 2$ has
infinitely many real forms, then $\kappa(V)\le d-2$.
Conversely, for every pair $(d,\kappa)$ with $d\ge 3$ and
$\kappa\le d-2$, there exists a smooth projective variety $V$ of
dimension $d$ and Kodaira dimension $\kappa$ with infinitely many
mutually non-isomorphic real forms.

\item For every $d\ge 2$, there exists a smooth projective rational
variety $V$ of dimension $d$ with infinitely many mutually non-isomorphic real forms.

\item If a smooth projective surface $V$ has infinitely many real
forms, then $V$ is rational or is a non-minimal surface birational
to a K3 or Enriques surface. Conversely, examples with infinitely
many mutually non-isomorphic real forms exist in each of these three classes.
\end{enumerate}
\end{theorem}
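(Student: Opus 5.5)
Theorem~\ref{thm:current2} collects results already in the literature, so the plan is to assemble it from \cite{Le,DO,DOY,DGLOWY} rather than prove anything new. The common mechanism is the standard dictionary between real forms and Galois cohomology. Real forms of $V$, up to isomorphism, correspond to the nonabelian cohomology set $H^1(\ZZ/2,\Aut(V))$, where $\ZZ/2$ acts on $\Aut(V)$ by conjugation with a fixed antiholomorphic involution. This set is finite when $\Aut(V)$ is, up to finite index and a connected algebraic part, an arithmetic group. That is the Borel--Serre finiteness theorem, together with the finiteness of $H^1$ for connected real algebraic groups. Infinitude of real forms therefore comes from the same kind of \emph{non-arithmetic} behaviour of the discrete part of $\Aut(V)$ that underlies Theorem~\ref{thm:current1}. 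I would organise each item in parallel with Theorem~\ref{thm:current1}.

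For (i), the first step is the bound. If $\kappa(V)=d$, then $\Aut(V)$ is finite. If $\kappa(V)=d-1$, I would pass to the Iitaka fibration $V\dashrightarrow B$, whose general fibres are elliptic curves or curves of genus $\ge 2$ (in fact elliptic, since $\kappa=d-1$ forces $\kappa$ of the fibre to be $0$). The base $B$ is of general type in the appropriate orbifold sense, so its induced automorphism group is finite. The fibrewise part is then an extension of a finitely generated abelian group (a Mordell--Weil type group of the elliptic fibration) by a torus-like connected group, and for such groups $H^1(\ZZ/2,-)$ is finite. This is the argument I expect \cite{DGLOWY} to carry out. For the converse, I would take a rational or K3-type base example with infinitely many real forms and form products $V_0\times W$, choosing $W$ to have finite automorphism group, a real form, and the Kodaira dimension needed to reach every pair $(d,\kappa)$ with $\kappa\le d-2$. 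The one real check is that $\Aut(V_0\times W)$ splits as a product, so the real forms of the product include those of $V_0$ paired with a fixed real form of $W$. This splitting holds when $\mathrm{Hom}$-type rigidity between the factors is available, for instance by choosing $W$ of general type or with $h^1=0$ and no morphisms to $V_0$ other than constants.

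For (ii), the case $d=2$ is exactly the rational surface of \cite{DOY}, the blowup of $T$ at a point of an exceptional curve. For $d\ge 3$ I would take products with $\PP^1$'s or Lesieutre-type constructions \cite{Le}, and again use that $\Aut$ of the product is controlled. For (iii), the bound follows the Enriques--Kodaira classification. Surfaces with $\kappa\ge 1$ are handled by (i). For $\kappa=0$, minimal K3, Enriques and abelian surfaces have automorphism groups that are arithmetic up to finite index, by the Torelli theorem combined with Sterk and Namikawa, so they have finitely many real forms. Non-minimal abelian and bielliptic surfaces, and ruled surfaces over curves of positive genus, also have finitely many. This leaves only rational surfaces and non-minimal models of K3 and Enriques surfaces. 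For the converse, the examples come from \cite{DO} and \cite{DGLOWY} for the K3 and Enriques classes, and from \cite{DOY} for the rational class.

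The step I expect to be the main obstacle is the finiteness half of (iii) for non-minimal ruled surfaces over curves of positive genus. Elementary transformations make their automorphism groups less transparent, and I would first reduce to the ruling. The ruling is $\Aut$-invariant when the base has genus $\ge 1$, so $\Aut(V)$ maps to the finite-or-algebraic group $\Aut(\text{base curve})$, and the fibre-preserving part can be analysed through $H^1$ of an algebraic group.
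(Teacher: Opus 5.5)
The paper gives no argument of its own for this theorem. It is recorded as what \cite{Le,DO,DOY,DGLOWY} yield, so assembling it from those references is the paper's route. Your identification of the $d=2$ case of (ii) with the blowup of $T$ in \cite{DOY} also matches the text. The supplementary sketches are not needed for this, but two of them would not survive as proofs.

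The product recipe for the converse of (i) cannot reach $(d,\kappa)=(3,0)$ if you insist that $\Aut(W)$ be finite. A surface $V_0$ with infinitely many real forms has $\kappa(V_0)\le 0$, so you need $\kappa(V_0)=0$ and $W$ a curve with $\kappa(W)=0$. That makes $W$ an elliptic curve, whose automorphism group is infinite.

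The repair is to drop finiteness and prove the splitting directly:
\begin{enumerate}
\item Since $q(V_0)=0$, the projection $V_0\times W\to W$ is the Albanese map. Hence every automorphism has the form $f(v,w)=(f_w(v),f_W(w))$ with $f_W\in\Aut(W)$.
\item For a surface birational to a K3 or Enriques surface, $h^0(T_{V_0})=0$. So $\Aut(V_0)$ is discrete and $w\mapsto f_w$ is constant.
\item Thus $\Aut(V_0\times W)=\Aut(V_0)\times\Aut(W)$. The $H^1$ for the product real structure is the product of the two sets, hence infinite.
\end{enumerate}

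Separately, Sterk and Namikawa do not show that $\Aut$ of a K3 or Enriques surface is arithmetic up to finite index. For a K3 surface with infinitely many $(-2)$-curves, the image of $\Aut$ has infinite index in $O(\NS)$, since it lies in the stabilizer of the nef cone, a complement to an infinite reflection group. What they provide is a rational polyhedral fundamental domain on the effective nef cone. Finiteness of real forms rests on that, not on Borel--Serre for an arithmetic group.
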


In \cite{DOY}, we constructed a smooth projective rational surface
with infinitely many mutually non-isomorphic real forms by blowing up $T$ at a suitable real
point on the image of $C_{11}$. We note that such a surface can also be obtained by blowing up $T$ at a suitable transcendental real point
on $C$. There is a smooth projective rational surface satisfying the property on real forms together with the properties in Theorem \ref{thm:main}.

\medskip
\noindent\textbf{Acknowledgements.} 
The first author is supported by the NUS grant A-8003576-00-00. The
second author is supported by JSPS Grant 25H00587, 25K21992 and NCTS scholar program. The third author is supported by NSFC (No. 12071337). The fourth author is supported by the NUS President’s Graduate Fellowship.

\medskip
\noindent\textbf{AI disclosure.}
The authors developed the main ideas, technical framework,
construction strategy for the surfaces, and checked all the details. OpenAI's ChatGPT 6 Astra assisted in
the construction of concrete examples and some arguments used in the proof. The authors assume responsibility for these
contributions and for the manuscript as a whole.


\section{Preliminaries}\label{sec:kummer}

In this section, we will recall several notations and properties from 
\cite{DOY} concerning a Kummer surface ${\rm Km}(E\times F)$, its rational quotient surface $T$, and their automorphisms. New statements presented here will be given proofs.

Let $E$ and $F$ be the elliptic curves defined respectively by the Weierstrass forms
\[
 E:y^2=x(x-1)(x-2)\qquad \text{ and} \qquad 
 F:(y')^2=x'(x'-1)(x'-3).
\]
The curves $E$ and $F$ are not isogenous. Indeed, $j(E)=1728$ and $j(F)=21952/9$. The first curve has complex
multiplication, whereas the second does not, since a CM $j$-invariant
is an algebraic integer. Isogeny preserves the rational endomorphism
algebra. Consider the Kummer K3 surface 
$$S:=\Km(E\times F)$$
associated to the abelian surface $E\times F$.

The $2$-torsion groups of $E$ and $F$ are denoted by $\{b_i\}_{i=1}^4$ and $\{a_i\}_{i=1}^4$ respectively. Then $S$ contains 24 smooth rational curves, as shown in Figure \ref{fig1}. These consist of eight smooth rational curves $E_i$ and $F_i$ ($1\le i\le 4$), which arise from the eight elliptic curves $E\times\{a_i\}$ and $\{b_i\}\times F$ on $E\times F$, together with sixteen exceptional curves $C_{ij}$ ($1\le i,j\le 4$) lying over the sixteen singular points of type $A_1$ on the quotient surface $E\times F/\langle -1_{E\times F}\rangle$. The configuration of these 24 smooth rational curves on $S$ is also displayed in Figure \ref{fig1}.

\begin{figure}
\centering
\unitlength 0.1in
\begin{picture}(25,24)(-1,-23.5)

\put(4.5,-22){\makebox(0,0)[rb]{$F_1$}}
\put(9.5,-22){\makebox(0,0)[rb]{$F_2$}}
\put(14.5,-22){\makebox(0,0)[rb]{$F_3$}}
\put(19.5,-22){\makebox(0,0)[rb]{$F_4$}}
\put(0.25,-18.5){\makebox(0,0)[lb]{$E_1$}}
\put(0.25,-13.5){\makebox(0,0)[lb]{$E_2$}}
\put(0.25,-8.5){\makebox(0,0)[lb]{$E_3$}}
\put(0.25,-3.5){\makebox(0,0)[lb]{$E_4$}}
\put(6,-16){\makebox(0,0)[lt]{$C_{11}$}}
\put(6,-11){\makebox(0,0)[lt]{$C_{12}$}}
\put(6,-6){\makebox(0,0)[lt]{$C_{13}$}}
\put(6,-1){\makebox(0,0)[lt]{$C_{14}$}}
\put(11,-16){\makebox(0,0)[lt]{$C_{21}$}}
\put(11,-11){\makebox(0,0)[lt]{$C_{22}$}}
\put(11,-6){\makebox(0,0)[lt]{$C_{23}$}}
\put(11,-1){\makebox(0,0)[lt]{$C_{24}$}}
\put(16,-16){\makebox(0,0)[lt]{$C_{31}$}}
\put(16,-11){\makebox(0,0)[lt]{$C_{32}$}}
\put(16,-6){\makebox(0,0)[lt]{$C_{33}$}}
\put(16,-1){\makebox(0,0)[lt]{$C_{34}$}}
\put(21,-16){\makebox(0,0)[lt]{$C_{41}$}}
\put(21,-11){\makebox(0,0)[lt]{$C_{42}$}}
\put(21,-6){\makebox(0,0)[lt]{$C_{43}$}}
\put(21,-1){\makebox(0,0)[lt]{$C_{44}$}}

\polyline(5,0)(5,-22)
\polyline(10,0)(10,-22)
\polyline(15,0)(15,-22)
\polyline(20,0)(20,-22)

\polyline(0,-19)(4.5,-19)\polyline(5.5,-19)(9.5,-19)
\polyline(10.5,-19)(14.5,-19)\polyline(15.5,-19)(19.5,-19)

\polyline(0,-14)(4.5,-14)\polyline(5.5,-14)(9.5,-14)
\polyline(10.5,-14)(14.5,-14)\polyline(15.5,-14)(19.5,-14)

\polyline(0,-9)(4.5,-9)\polyline(5.5,-9)(9.5,-9)
\polyline(10.5,-9)(14.5,-9)\polyline(15.5,-9)(19.5,-9)

\polyline(0,-4)(4.5,-4)\polyline(5.5,-4)(9.5,-4)
\polyline(10.5,-4)(14.5,-4)\polyline(15.5,-4)(19.5,-4)

\polyline(2,-20)(6,-16)
\polyline(2,-15)(6,-11)
\polyline(2,-10)(6,-6)
\polyline(2,-5)(6,-1)
\polyline(7,-20)(11,-16)
\polyline(7,-15)(11,-11)
\polyline(7,-10)(11,-6)
\polyline(7,-5)(11,-1)
\polyline(12,-20)(16,-16)
\polyline(12,-15)(16,-11)
\polyline(12,-10)(16,-6)
\polyline(12,-5)(16,-1)
\polyline(17,-20)(21,-16)
\polyline(17,-15)(21,-11)
\polyline(17,-10)(21,-6)
\polyline(17,-5)(21,-1)

\end{picture}
\caption{Curves $E_i$, $F_j$ and $C_{ij}$ with
  $E_i^2=F_j^2=C_{ij}^2=-2$}
\label{fig1}
\end{figure}

Throughout this paper, we use $x$ for the affine coordinate of $E_1 = E/\langle -1_E \rangle$ and set (with respect to this coordinate):
$$ E_{1}\,\, ,\,\, P := E_1 \cap C_{11} = \infty\,\, ,\,\, E_1 \cap C_{21} = 0\,\, ,\,\, E_1 \cap C_{31} = 1\,\, ,\,\, E_1 \cap C_{41} = 2.$$

The involution $\theta\in{\rm Aut}(S)$ induced by $(1_E,-1_F)\in {\rm Aut}(E\times F)$ fixes $E_1$ pointwise.
The associated quotient map is
\[
 \pi:S\longrightarrow S/\langle\theta\rangle=T,
\]
and $\pi|_{E_1}:E_1\to C$ is an isomorphism (note that in \cite{DOY}  $C$ denotes the curve $E_1$, but in this paper, we use $C$ to denote the image of $E_1$ under $\pi$).
By
\cite[Lemma~2.1]{DOY}, $\theta$ is central in $\Aut(S)$ and
\begin{equation}\label{eq:quotient}
 \Aut(T)\simeq\Aut(S)/\langle\theta\rangle .
\end{equation}
See also \cite[Lemma~3.3]{DO} for centrality. 

Next, we recall one involution $\iota$ of $S$ and 
its conjugates.
As in \cite[Page 956]{DO} and \cite[Page 271]{DOY}, consider the elliptic fibrations $\Phi_{D_i} : S \to \mathbb{P}^1$ ($i = 1$, $2$) on $S$ defined respectively by the complete linear systems $|D_i|$ of the divisors of Kodaira's singular fiber type,
$$D_1 := E_1 + C_{11} + F_1 + C_{12} + E_2 + C_{22} + F_2 + C_{21}$$
and
$$D_2 := E_1 + 2C_{11} + E_2 + 2C_{12} + E_{3} + 2C_{13} + 3F_{1},$$
see Figure \ref{fig2}. We choose $C_{31}$ as the zero section of both $\Phi_{D_1}$ and $\Phi_{D_2}$. 

\begin{figure}
\centering
\unitlength 0.1in
\begin{picture}(56,25)(-16,-24)


\put(-10.5,-22){\makebox(0,0)[rb]{$F_1$}}
\put(-5.5,-22){\makebox(0,0)[rb]{$F_2$}}
\put(-0.5,-22){\makebox(0,0)[rb]{$F_3$}}
\put(4.5,-22){\makebox(0,0)[rb]{$F_4$}}
\put(-14.75,-18.5){\makebox(0,0)[lb]{$E_1$}}
\put(-14.75,-13.5){\makebox(0,0)[lb]{$E_2$}}
\put(-14.75,-8.5){\makebox(0,0)[lb]{$E_3$}}
\put(-14.75,-3.5){\makebox(0,0)[lb]{$E_4$}}
\put(-9,-16){\makebox(0,0)[lt]{$C_{11}$}}
\put(-9,-11){\makebox(0,0)[lt]{$C_{12}$}}
\put(-9,-6){\makebox(0,0)[lt]{$C_{13}$}}
\put(-9,-1){\makebox(0,0)[lt]{$C_{14}$}}
\put(-4,-16){\makebox(0,0)[lt]{$C_{21}$}}
\put(-4,-11){\makebox(0,0)[lt]{$C_{22}$}}
\put(-4,-6){\makebox(0,0)[lt]{$C_{23}$}}
\put(-4,-1){\makebox(0,0)[lt]{$C_{24}$}}
\put(1,-16){\makebox(0,0)[lt]{$C_{31}$}}
\put(1,-11){\makebox(0,0)[lt]{$C_{32}$}}
\put(1,-6){\makebox(0,0)[lt]{$C_{33}$}}
\put(1,-1){\makebox(0,0)[lt]{$C_{34}$}}
\put(6,-16){\makebox(0,0)[lt]{$C_{41}$}}
\put(6,-11){\makebox(0,0)[lt]{$C_{42}$}}
\put(6,-6){\makebox(0,0)[lt]{$C_{43}$}}
\put(6,-1){\makebox(0,0)[lt]{$C_{44}$}}

\linethickness{1mm}
\polyline(-10,0)(-10,-22)
\polyline(-5,0)(-5,-22)
\linethickness{0.1mm}
\polyline(0,0)(0,-22)
\polyline(5,0)(5,-22)

\linethickness{1mm}
\polyline(-15,-19)(-10.5,-19)\polyline(-9.5,-19)(-5.5,-19)
\polyline(-4.5,-19)(-0.5,-19)\polyline(0.5,-19)(4.5,-19)
\polyline(-15,-14)(-10.5,-14)\polyline(-9.5,-14)(-5.5,-14)
\polyline(-4.5,-14)(-0.5,-14)\polyline(0.5,-14)(4.5,-14)
\linethickness{0.1mm}
\polyline(-15,-9)(-10.5,-9)\polyline(-9.5,-9)(-5.5,-9)
\polyline(-4.5,-9)(-0.5,-9)\polyline(0.5,-9)(4.5,-9)
\polyline(-15,-4)(-10.5,-4)\polyline(-9.5,-4)(-5.5,-4)
\polyline(-4.5,-4)(-0.5,-4)\polyline(0.5,-4)(4.5,-4)

\linethickness{3pt}
\polyline(-13.06,-20)(-9.17,-16.11)   
\polyline(-13.06,-15)(-9.17,-11.11)   
\polyline(-8.19,-20)(-4.31,-16.11)    
\polyline(-8.19,-15)(-4.31,-11.11)    
\linethickness{0.33pt}
\polyline(-13.06,-10)(-9.17,-6.11)    
\polyline(-13.06,-5)(-9.17,-1.11)     
\polyline(-8.19,-10)(-4.31,-6.11)     
\polyline(-8.19,-5)(-4.31,-1.11)      
\polyline(-3.19,-20)(0.69,-16.11)     
\polyline(-3.19,-15)(0.69,-11.11)     
\polyline(-3.19,-10)(0.69,-6.11)      
\polyline(-3.19,-5)(0.69,-1.11)       
\polyline(1.81,-20)(5.69,-16.11)      
\polyline(1.81,-15)(5.69,-11.11)      
\polyline(1.81,-10)(5.69,-6.11)       
\polyline(1.81,-5)(5.69,-1.11)        


\put(19.5,-22){\makebox(0,0)[rb]{$F_1$}}
\put(24.5,-22){\makebox(0,0)[rb]{$F_2$}}
\put(29.5,-22){\makebox(0,0)[rb]{$F_3$}}
\put(34.5,-22){\makebox(0,0)[rb]{$F_4$}}
\put(15.25,-18.5){\makebox(0,0)[lb]{$E_1$}}
\put(15.25,-13.5){\makebox(0,0)[lb]{$E_2$}}
\put(15.25,-8.5){\makebox(0,0)[lb]{$E_3$}}
\put(15.25,-3.5){\makebox(0,0)[lb]{$E_4$}}
\put(21,-16){\makebox(0,0)[lt]{$C_{11}$}}
\put(21,-11){\makebox(0,0)[lt]{$C_{12}$}}
\put(21,-6){\makebox(0,0)[lt]{$C_{13}$}}
\put(21,-1){\makebox(0,0)[lt]{$C_{14}$}}
\put(26,-16){\makebox(0,0)[lt]{$C_{21}$}}
\put(26,-11){\makebox(0,0)[lt]{$C_{22}$}}
\put(26,-6){\makebox(0,0)[lt]{$C_{23}$}}
\put(26,-1){\makebox(0,0)[lt]{$C_{24}$}}
\put(31,-16){\makebox(0,0)[lt]{$C_{31}$}}
\put(31,-11){\makebox(0,0)[lt]{$C_{32}$}}
\put(31,-6){\makebox(0,0)[lt]{$C_{33}$}}
\put(31,-1){\makebox(0,0)[lt]{$C_{34}$}}
\put(36,-16){\makebox(0,0)[lt]{$C_{41}$}}
\put(36,-11){\makebox(0,0)[lt]{$C_{42}$}}
\put(36,-6){\makebox(0,0)[lt]{$C_{43}$}}
\put(36,-1){\makebox(0,0)[lt]{$C_{44}$}}

\linethickness{1mm}
\polyline(20,0)(20,-22)
\linethickness{0.1mm}
\polyline(25,0)(25,-22)
\polyline(30,0)(30,-22)
\polyline(35,0)(35,-22)

\linethickness{1mm}
\polyline(15,-19)(19.5,-19)\polyline(20.5,-19)(24.5,-19)
\polyline(25.5,-19)(29.5,-19)\polyline(30.5,-19)(34.5,-19)
\polyline(15,-14)(19.5,-14)\polyline(20.5,-14)(24.5,-14)
\polyline(25.5,-14)(29.5,-14)\polyline(30.5,-14)(34.5,-14)
\polyline(15,-9)(19.5,-9)\polyline(20.5,-9)(24.5,-9)
\polyline(25.5,-9)(29.5,-9)\polyline(30.5,-9)(34.5,-9)
\linethickness{0.1mm}
\polyline(15,-4)(19.5,-4)\polyline(20.5,-4)(24.5,-4)
\polyline(25.5,-4)(29.5,-4)\polyline(30.5,-4)(34.5,-4)

\linethickness{3pt}
\polyline(16.94,-20)(20.83,-16.11)    
\polyline(16.94,-15)(20.83,-11.11)    
\polyline(16.94,-10)(20.83,-6.11)     
\linethickness{0.33pt}
\polyline(16.94,-5)(20.83,-1.11)      
\polyline(21.81,-20)(25.69,-16.11)    
\polyline(21.81,-15)(25.69,-11.11)    
\polyline(21.81,-10)(25.69,-6.11)     
\polyline(21.81,-5)(25.69,-1.11)      
\polyline(26.81,-20)(30.69,-16.11)    
\polyline(26.81,-15)(30.69,-11.11)    
\polyline(26.81,-10)(30.69,-6.11)     
\polyline(26.81,-5)(30.69,-1.11)      
\polyline(31.81,-20)(35.69,-16.11)    
\polyline(31.81,-15)(35.69,-11.11)    
\polyline(31.81,-10)(35.69,-6.11)     
\polyline(31.81,-5)(35.69,-1.11)      

\end{picture}
\caption{Divisors $D_1$ and $D_2$.}
\label{fig2}
\end{figure}

Let \(\iota\) denote the inversion of the elliptic fibration \(\Phi_{D_2}\) with respect to the zero section \(C_{31}\). Then \(\iota\in \Aut(S)\), and \(\iota\) is an involution of \(S\) satisfying $\iota^*\omega_S=-\omega_S$. 
Here we fix a non-zero global holomorphic 2-form $\omega_S$
on $S$, which is unique up to scalar multiplications by $\CC\setminus\{0\}$.

Let \(f_1\) denote the translation automorphism of the elliptic fibration \(\Phi_{D_1}\) determined by the section \(C_{41}\) with respect to the zero section \(C_{31}\). Then \(f_1\in \Aut(S)\) and $f_1^*\omega_S=\omega_S$. Finally, for every \(n\in\mathbb{Z}\), define
\[
\iota_n := f_1^{-n}\circ \iota \circ f_1^{n}.
\]

We need one involution in $\Aut(S)$ not considered in \cite{DOY}. Let $t_0\in {\rm Aut}(E)$ denote the translation by the $2$-torsion point  $(1,0)\in E$ with respect to the origin $(2, 0) \in E$ under the affine coordinates $(x, y)$ in the equation of $E$. Then $t_0\times 1_F\in {\rm Aut}(E\times F)$ induces an involution $\phi\in {\rm Aut}(S)$ satisfying  $\phi(E_1)=E_1$ and $\phi^*(\omega_S)=\omega_S$. The following lemma will be used in Section \ref{sec:proof}.

\begin{lemma}\label{lem:maps}
The automorphisms $f_1,\iota_0\iota_1, \phi\in\Aut^s(S,E_1)$ satisfy
\begin{equation}\label{eq:maps}
 f_1|_{E_1}(x)=2x,\qquad (\iota_0\iota_1)|_{E_1}(x)=x+1,\qquad \phi |_{E_1}(x)=2/x.
\end{equation}
They descend to automorphisms of $T$ preserving $C$. Moreover, their action on
$C$ has no nonempty finite invariant subset.
\end{lemma}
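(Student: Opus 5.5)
Each automorphism has to be handled separately, using what we know about how it acts on the curves in Figure~\ref{fig1}. After that, descent to $T$ and the statement about finite invariant sets are formal.

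\emph{The map $f_1$.} Since $f_1$ is translation in $\Phi_{D_1}$, it preserves every fiber. The fiber $D_1$ is an $I_8$ fiber. The zero section $C_{31}$ meets $E_1$, and the section $C_{41}$ also meets $E_1$, at a different point. So translation by $C_{41}$ maps each component of $D_1$ to itself: the component group of $I_8$ is $\ZZ/8$, and $C_{31},C_{41}$ meet the same component $E_1$. In particular $f_1(E_1)=E_1$. On $E_1$, the map $f_1$ fixes the two nodes $E_1\cap C_{11}=\infty$ and $E_1\cap C_{21}=0$ of the cycle, so $f_1|_{E_1}(x)=\lambda x$ for some $\lambda$. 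It sends $C_{31}\cap E_1=1$ to $C_{41}\cap E_1=2$, so $\lambda=2$. The map is symplectic, as stated in the text.

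\emph{The map $\iota_0\iota_1$.} The involution $\iota$ preserves the fibers of $\Phi_{D_2}$ and fixes the zero section $C_{31}$. The fiber $D_2$ has type $IV^*$ ($\tilde E_6$), and $E_1$ is the end component meeting $C_{31}$. Inversion fixes the identity component, so $\iota(E_1)=E_1$. Moreover $\iota|_{E_1}$ fixes $\infty$ (the point meeting $C_{11}$) and $1$ (the point meeting $C_{31}$). Since $f_1(E_1)=E_1$, the conjugate $\iota_1=f_1^{-1}\iota f_1$ also preserves $E_1$. Therefore $\iota_0\iota_1|_{E_1}$ is the composite of $\iota|_{E_1}$ with its conjugate by $x\mapsto 2x$. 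Both fix $\infty$, so both are affine in $x$.

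I expect $\iota|_{E_1}$ to be nontrivial, namely $x\mapsto 2-x$. The reason is that the involution on the $\tilde E_6$ fiber induced by inversion swaps the two non-identity branches, and it acts on the end component $E_1$ through the symmetry exchanging the points $0$ and $2$. These are the points where $C_{21}$ and $C_{41}$ meet $E_1$, and inversion should swap the sections $C_{21}$ and $C_{41}$. This matches the later claim, because then
\[
\iota_1|_{E_1}(x)=\tfrac12\bigl(2-2x\bigr)=1-x \quad\text{or}\quad x\mapsto 2-x/2\cdot 2\ \text{(after conjugation)},
\]
and the composite is a translation. I will fix the exact formula by checking which of $0,1,2$ are exchanged, and the target is $x\mapsto x+1$. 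This identification of $\iota|_{E_1}$ is the main obstacle. If the heuristic about swapping sections fails, I will instead use the explicit Weierstrass model from \cite{DOY}. Finally, $\iota_0\iota_1$ is symplectic because each factor multiplies $\omega_S$ by $-1$.

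\emph{The map $\phi$.} The map $t_0\times 1_F$ preserves $E\times\{a_1\}$, so $\phi(E_1)=E_1$. On $E_1=\PP^1$ it acts by the Möbius transformation induced by translation by the $2$-torsion point $(1,0)$ with origin $(2,0)$. This transformation permutes the four branch points: it swaps $2\leftrightarrow 1$ and $\infty\leftrightarrow 0$. The involution of $\PP^1$ with these swaps is $x\mapsto 2/x$. The map is symplectic because translations preserve the 2-form.

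\emph{Descent and invariant sets.} By \eqref{eq:quotient}, and because $\theta$ is central, all three maps descend to $T$. They preserve $C=\pi(E_1)$, and under $\pi|_{E_1}$ they induce the same formulas on $C$. Now let $\Sigma\subset\PP^1$ be a finite nonempty invariant set. Translation $x\mapsto x+1$ has infinite orbits on $\CC$, so $\Sigma\subset\{\infty\}$. But $x\mapsto 2/x$ sends $\infty$ to $0$, so $\{\infty\}$ is not invariant. This is a contradiction, so no such $\Sigma$ exists.
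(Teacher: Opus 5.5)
\textbf{There is a gap: you never show that $\iota|_{E_1}$ is not the identity, and this step is needed for $(\iota_0\iota_1)|_{E_1}(x)=x+1$.}

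The rest of your proposal is correct. This includes your treatment of $f_1$ (via the $I_8$ fiber $D_1$, with $f_1$ fixing the nodes $0,\infty$ of $E_1$ and sending $1=E_1\cap C_{31}$ to $2=E_1\cap C_{41}$) and of $\phi$ (translation by a $2$-torsion point swaps $0\leftrightarrow\infty$ and $1\leftrightarrow 2$, and a M\"obius map is determined by three points). The descent and the invariant-set argument are also fine. For $f_1$ and $\phi$ your arguments are self-contained geometric substitutes for what the paper does: it cites \cite[Lemma~2.2(3)]{DOY} and computes the translation formula $(x,y)\mapsto(2/x,-2y/x^2)$ explicitly.

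You correctly show that $\iota(E_1)=E_1$ and that $\iota|_{E_1}$ is an involutive M\"obius map fixing $\infty$ and $1$. So it is either $\id$ or $x\mapsto 2-x$, but you never rule out $\id$. That alternative carries all the content: if $\iota|_{E_1}=\id$, then $\iota_1|_{E_1}=\id$ and $(\iota_0\iota_1)|_{E_1}=\id$.

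Your heuristic does not settle this:
\begin{enumerate}
\item That inversion swaps the two non-identity branches of the $IV^*$ fiber concerns the arms $C_{12}+E_2$ and $C_{13}+E_3$. It says nothing about how $\iota$ acts on $E_1$ itself.
\item The claim $\iota(C_{21})=C_{41}$ is asserted without proof, and it is at least as strong as what you want to show.
\item The second alternative in your display is meaningless.
\item Deferring to the Weierstrass model of \cite{DOY} is not an argument.
\end{enumerate}

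The missing step is short. The inversion $\iota$ fixes the zero section $C_{31}$ pointwise, and $C_{31}$ meets $E_1$ transversally at the single point $p$ with $x(p)=1$. Suppose $\iota$ also fixed $E_1$ pointwise. Then $d\iota_p$ would fix two independent tangent vectors, so $d\iota_p=\id$. But $\iota^*\omega_S=-\omega_S$ forces $\det d\iota_p=-1$ at every fixed point, a contradiction.

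Alternatively: $E_1$ meets the rest of the additive fiber $D_2$ only at $\infty$. So $E_1\setminus\{\infty\}$ is the identity component of the smooth locus of that fiber, a copy of $(\CC,+)$ with origin $x=1$. Inversion acts there as $u\mapsto -u$, which is nontrivial.

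Hence $\iota|_{E_1}(x)=2-x$. It follows that $\iota_1|_{E_1}(x)=\tfrac12(2-2x)=1-x$ and $(\iota_0\circ\iota_1)|_{E_1}(x)=2-(1-x)=x+1$. With this inserted, your proof is complete.
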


\begin{proof}
By \cite[Lemma~2.2(3)]{DOY}, $f_1|_{E_1}(x)=2x$, and the
anti-symplectic involutions $\iota_n$ satisfy
 $\iota_n|_{E_1}(x)=2^{1-n}-x$. Thus $\iota_0\iota_1$ is symplectic
and restricts to $x\mapsto x+1$.

By direct calculation, translation of $E$ by the $2$-torsion point $(1,0)\in E$ with respect to the origin $(2, 0) \in E$ is given by
\[
 (x,y)\longmapsto\left(\frac2x,-\frac{2y}{x^2}\right).
\]
This implies $\phi |_{E_1}(x)=2/x$.

The second assertion follows from \eqref{eq:quotient}. For the last assertion, it is enough to observe that the only non-empty
finite subset of $C$ invariant under $x\mapsto x+1$ is
$\{\infty\}$ which is not invariant by $x\mapsto2/x$.
\end{proof}

\begin{lemma}\label{lem:faithful}
The actions of $\Aut(T)$ on $\Pic(T)$ and of $\Aut(X_q)$ on
$\Pic(X_q)$ are faithful. The same statements hold over every
algebraically closed extension of $\CC$, for every center $q$ on the
base-changed curve $C$. The action of $\Aut^s(S)$ on $\NS(S)$ is
faithful as well.
\end{lemma}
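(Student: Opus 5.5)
The plan is to use the standard fact that an automorphism of a smooth projective rational surface acting trivially on $\Pic$ preserves every $(-1)$-curve and every negative curve. Via these curves it descends to an automorphism of a minimal model fixing enough points to be the identity.

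\textbf{The surface $T$.} Let $g\in\Aut(T)$ act trivially on $\Pic(T)$. Then $g$ preserves each exceptional curve $e_{ij}$ over the sixteen points $(a,b)$ with $a\in\{0,1,2,\infty\}$ and $b\in\{0,1,3,\infty\}$. It also preserves the classes of the two rulings, so it preserves the fibrations $T\to\PP^1\times\PP^1\to\PP^1$ (first and second projection), since these are given by the linear systems of the pulled-back fibre classes.

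Hence $g$ descends to an automorphism $\bar g$ of $\PP^1\times\PP^1$ of the form $(u,v)\mapsto(\alpha(u),\beta(v))$. It fixes the sixteen blown-up points, because it preserves each exceptional curve. So $\alpha$ fixes $0,1,2,\infty$ and $\beta$ fixes $0,1,3,\infty$. A Möbius transformation with three fixed points is the identity, so $\bar g=\id$, and therefore $g=\id$ since $T\to\PP^1\times\PP^1$ is birational.

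\textbf{The surface $X_q$.} For $X_q=\Bl_qT$ the argument is the same. An element acting trivially on $\Pic(X_q)$ preserves the new exceptional curve $e_q$ (the unique effective divisor in its class, having negative self-intersection). It also preserves the strict transforms of the $e_{ij}$, which remain irreducible negative curves because $q\in C$ is not on any $e_{ij}$. Here we note that $C$ meets the $e_{i4}$ at $x=0,1,2,\infty$, and the points of $C$ lying on exceptional curves have algebraic coordinate. The preservation of $e_q$ requires only that $e_q^2=-1$, whatever $q$ is. Contracting $e_q$ equivariantly gives an element of $\Aut(T)$ acting trivially on $\Pic(T)$, which is $\id$ by the first step, so the original automorphism is $\id$.

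This argument is purely algebraic: it uses only linear systems, negative curves, and the rigidity of $\PGL_2$ fixing three points. It therefore works verbatim over any algebraically closed field $K\supset\CC$ and for any center $q$ on $C_K$. If $q$ happens to lie on some $e_{ij}$ (only at $x\in\{0,1,2,\infty\}$), we use the strict transform of that $e_{ij}$ together with $e_q$. Both are preserved as the unique effective divisors in their classes, since they are negative irreducible curves. Contracting $e_q$ first still recovers $e_{ij}$ as preserved.

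\textbf{The Kummer surface $S$.} For $\Aut^s(S)$ acting on $\NS(S)$, let $h$ be symplectic with $h^*=\id$ on $\NS(S)$. Then $h^*$ acts on the transcendental lattice $T_S$ by a root of unity, and it acts trivially on $H^{2,0}$ because $h$ is symplectic. An isometry of $H^2(S,\ZZ)$ that is trivial on $\NS(S)$ acts on the discriminant group of $\NS(S)$ trivially, hence also on that of $T_S$. Symplecticity forces the character to be $1$, so $h^*=\id$ on $T_S$.

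In more detail: $h^*|_{T_S}$ is a finite-order Hodge isometry whose eigenvalue on $\omega_S$ is $1$. The eigenspace of eigenvalue $1$ is defined over $\QQ$, contains $\omega_S$, and so contains all of $T_S\otimes\QQ$ (the minimality of $T_S$). Thus $h^*=\id$ on $H^2(S,\ZZ)$, and the Torelli theorem (injectivity of $\Aut(S)\to O(H^2)$ for K3 surfaces) gives $h=\id$.

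The main obstacle is the case analysis for special positions of $q$ over extended fields, but the argument above handles it uniformly.
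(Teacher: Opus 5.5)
Your proposal is correct. For $T$ and $X_q$ it is the paper's argument: negative curves are unique in their classes, the automorphism descends, and a M\"obius map with three fixed points is the identity.

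The discussion of strict transforms of the $e_{ij}$ is unnecessary. Once $e_q$ is preserved, the descended automorphism of $T$ is Picard-trivial because $\beta^*:\Pic(T)\to\Pic(X_q)$ is injective and equivariant. Your sentence that $q$ lies on no $e_{ij}$ is false for $x(q)\in\{0,1,2,\infty\}$, though you repair this yourself.

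For $S$ you take a genuinely different route. The eigenvalue-$1$ subspace of $h^*$ on $T_S\otimes\QQ$ is a $\QQ$-subspace whose complexification contains $\omega_S$, so by minimality of $T_S$ it is everything. Hence $h^*=\id$ on $H^2(S,\ZZ)$, and the Torelli theorem gives $h=\id$.

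The paper does not invoke Torelli here. It descends $f$ to $T$ via $\Aut(T)\simeq\Aut(S)/\langle\theta\rangle$. It notes the descent is Picard-trivial because $\pi^*$ is injective on $\Pic(T)\otimes\QQ$ and $\Pic(T)$ is free. It then applies the first half of the lemma to get $f\in\langle\theta\rangle$, and concludes since $\theta$ is anti-symplectic.

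Your argument works for any projective K3 surface, at the cost of the strong Torelli theorem. The paper's is shorter and recycles the faithfulness on $\Pic(T)$ and the central involution $\theta$, so it is specific to this $S$.

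Your first paragraph on $S$ is superfluous: the discriminant-group remark is not needed, and ``$h^*$ acts on $T_S$ by a root of unity'' is imprecise, since the action is not scalar in general. The ``in more detail'' paragraph alone carries the step.
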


\begin{proof}
Recall that an irreducible curve $\Gamma$ of negative self-intersection is the unique effective divisor in its numerical class. Indeed, if $D$ is another effective divisor in this class, then $D-\Gamma$ is numerically zero. So its intersection with an ample class is zero and then the coefficient of $\Gamma$ in $D-\Gamma$ is negative. This contradicts the fact that its intersection with $\Gamma$ is also zero.

We say that an automorphism is Picard-trivial if its action on the Picard group is trivial. 
A Picard-trivial automorphism of $T$ therefore preserves the sixteen
exceptional curves. It descends to an automorphism of
$\PP^1\times\PP^1$ preserving the two ruling classes and fixing each
grid point. Both factors fix at least three points, so this
automorphism is the identity. 

Likewise, a Picard-trivial automorphism
of $X_q$ preserves the last exceptional curve and descends to the
identity on $T$. This argument works over any algebraically closed
extension and for every center. 

If $f\in\Aut^s(S)$ acts trivially on $\NS(S)$, its descent acts
trivially on $\Pic(T)\otimes\QQ$, since $\pi^*$ is injective.
The group $\Pic(T)$ is free, so the descent is the identity.
Consequently $f\in\langle\theta\rangle$. As $\theta$ is
anti-symplectic, we have $f=\id_S$.
\end{proof}

\section{The inertia group}\label{sec:inertia}
In this section, we show that the group $\Ine(T,C)$ is not finitely generated (Proposition \ref{prop:inertia}), which will play an important role in the proof of Theorem \ref{thm:main} in Section \ref{sec:proof}. 

We retain the notation of Section~\ref{sec:kummer}. Let $R:=E_1\subset S$. Then $C=\pi(R)\subset T$. Consider the group
\[
 K:=\Ine^s(S,R)
   =\{h\in\Aut(S,R)\mid h|_R=\id_R,\ h^*\omega_S=\omega_S\}.
\]
Roughly speaking, we will first prove that $K$ is not finitely generated, and then
compare it with $\Ine(T,C)$. Since every element of $K$ acts trivially
on $R$, restriction to $R$ gives no information about this group.
Instead, we consider the first nonzero term of its action in the
direction transverse to $R$. This gives a homomorphism from $K$ to
an additive group of sections of a line bundle on $R$ (see Lemma \ref{lem:homJ}).

\begin{lemma}\label{lem:Knontrivial}
The following statements hold:
\begin{enumerate}
    \item  The group $K$ contains a nonidentity element.
    \item  For any $h\in K$ and any $p\in R$, the differential $dh_p: T_{S,p}\rightarrow T_{S,p}$ is the identity.
\end{enumerate}
\end{lemma}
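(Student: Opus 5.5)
For part (i), I will invoke the result of the second author \cite{Og19} on automorphisms of K3 surfaces fixing a smooth rational curve pointwise. The hypothesis to check is that $R=E_1$ is a $(-2)$-curve on $S$ whose orthogonal complement in $\NS(S)$, or equivalently the relevant lattice-theoretic condition in \cite{Og19}, admits an isometry acting trivially on the class of $R$. That isometry should be nontrivial, preserve the ample cone, and act trivially on the discriminant. For a Kummer surface of a product of non-isogenous elliptic curves, $\NS(S)$ has rank $18$. The reflection group and the automorphism group are both very large, so the pointwise stabilizer of $R$ is infinite.

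The concrete route is to build an element of $K$ directly from the automorphisms already available, as a commutator-type product. The element $\theta$ fixes $R$ pointwise but is anti-symplectic, so it cannot be used. Instead, for $g\in\Aut^s(S,R)$ acting on $R$ by a Möbius map, I will take suitable combinations. By Lemma~\ref{lem:maps}, $\Aut^s(S,R)$ restricts to $R$ with image containing $x\mapsto 2x$, $x\mapsto x+1$ and $x\mapsto 2/x$. These generate a group containing relations that hold in $\mathrm{PGL}_2$ but, plausibly, not in $\Aut(S)$. An example is $(f_1\circ \iota_0\iota_1)$ versus $(\iota_0\iota_1)^2\circ f_1$: both restrict to $x\mapsto 2x+2$. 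The element
\[
h:=(f_1\,\iota_0\iota_1)^{-1}(\iota_0\iota_1)^2 f_1
\]
then lies in $K$. Its nontriviality can be certified by Lemma~\ref{lem:faithful}, since $\Aut^s(S)$ acts faithfully on $\NS(S)$, so it suffices that $h^*\neq\id$ on $\NS(S)$. The main obstacle is verifying that such a word acts nontrivially on $\NS(S)$. Doing this by hand requires the explicit matrices of $f_1,\iota_n$ on the 24-curve lattice. If that is too heavy, I fall back on \cite{Og19}, whose conclusion is precisely that such an element exists.

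For part (ii), fix $p\in R$ and $h\in K$. Since $h|_R=\id_R$, the differential $dh_p$ is the identity on $T_{R,p}$. Therefore, in a basis $(v,n)$ with $v$ tangent to $R$, it is upper triangular with diagonal entries $1$ and $\lambda$, where $\lambda$ describes the action on the normal line $N_{R/S,p}$. Symplecticity gives $\det dh_p=1$ because $h^*\omega_S=\omega_S$ and $\omega_S$ is nowhere vanishing, so $\lambda=1$. It remains to kill the off-diagonal term $dh_p(n)=n+c(p)v$. The map $p\mapsto c(p)$ defines a section of $\mathrm{Hom}(N_{R/S},T_R)=T_R\otimes N_{R/S}^{\vee}$, which has degree $2+2=4$, so vanishing of sections is not automatic. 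I will use instead that $h$ has finite order or linearizable behavior near $R$? That fails for infinite-order elements. A better argument is to consider $h$ as a biholomorphism near $R$ fixing $R$ pointwise. Choosing local coordinates $(x,y)$ with $R=\{y=0\}$, we get $h(x,y)=(x+a(x)y+O(y^2),\,y+b(x)y^2+\dots)$. The term $a(x)y$ must be globally a section of $T_R\otimes N^\vee$ on $R$. Invariance under the transformations of Lemma~\ref{lem:maps} does not help for one $h$. So I will argue via the fixed locus instead: the fixed locus of a symplectic automorphism near a fixed curve. If $c(p)\neq0$ at some $p$, then $dh_p$ is a nontrivial unipotent map. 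The fixed locus of a symplectic automorphism is then still $R$ locally, which is allowed. Hence I expect the honest approach is that $c$ is a holomorphic section with prescribed structure forcing $c\equiv0$. I will verify this by computing how $a(x)$ transforms under coordinate change, showing it is a section of $T_R\otimes N^{\vee}$ restricted by $h^*\omega=\omega$ to be a derivative term that must vanish. This is the key technical point of (ii).
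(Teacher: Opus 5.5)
Your part (ii) has a genuine gap. You correctly reduce to the off-diagonal term: $dh_p$ is unipotent, and $dh_p-\id$ induces $c(p)\in\mathrm{Hom}(N_{R/S,p},T_{R,p})$. The mechanism you propose for killing $c$, however, cannot work.

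In coordinates with $R=\{w=0\}$ and $\omega_S=dz\wedge dw$, write $h(z,w)=(z+a(z)w+O(w^2),\,w+b(z)w^2+O(w^3))$. Then $\det Dh=1+(a'(z)+2b(z))w+O(w^2)$. So $h^*\omega_S=\omega_S$ only forces $b=-a'/2$ and leaves $a$ completely free. Indeed, the time-one flow of the Hamiltonian $a(z)w^2/2$ is a local symplectic germ fixing $R$ pointwise with differential $\bigl(\begin{smallmatrix}1&a\\0&1\end{smallmatrix}\bigr)$ along $R$. Globally, as you note yourself, $T_R\otimes N_{R/S}^\vee$ has degree $4$, so nothing forces vanishing there either.

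The missing ingredient is the involution $\theta$, which you dismissed in part (i). It fixes $R$ pointwise and is anti-symplectic, so $d\theta_p$ is diagonalizable with eigenvalue $1$ on $T_{R,p}$ and $-1$ on a transverse line. Since $\theta$ is central in $\Aut(S)$, $dh_p$ commutes with $d\theta_p$. Hence $dh_p$ preserves that transverse eigenline, which kills $c(p)$. Together with $\det dh_p=1$, this gives $dh_p=\id$.

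Part (i) is also incomplete as written. Your word $(f_1\iota_0\iota_1)^{-1}(\iota_0\iota_1)^2f_1$ does lie in $K$, but you never show it is not the identity. The fallback on \cite{Og19} is also not literally the needed statement. \cite[Proposition~4.1]{Og19} gives an element $f$ of infinite order in $\Ine(S,E_4)$; this $f$ need not be symplectic, and it concerns $E_4$ rather than $E_1$. Two further steps are needed:
\begin{enumerate}
\item conjugate by an automorphism moving $E_4$ to $E_1$, for example the one induced by $1_E\times t'$, where $t'$ is the translation of $F$ sending $a_1$ to $a_4$;
\item replace $f$ by a power $f^N$ acting trivially on $H^0(S,K_S)$. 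This is possible because the canonical representation of $\Aut(S)$ is finite, and $f^N$ is still nontrivial because $f$ has infinite order.
\end{enumerate}
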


\begin{proof}
  By \cite[Proposition 4.1]{Og19}, the inertia group ${\rm Ine}(S,E_4)$ contains an element $f$ of infinite order. Consider the translation $t'$ of $F$ taking the $2$-torsion point $a_1$ to $a_4$. Then $1_E\times t'$ induces an automorphism $\phi'\in {\rm Aut}(S)$ satisfying $\phi'(E_1)=E_4$. Then $\phi'^{-1}f^N \phi'$ is a nonidentity element in $K$ for some $N>0$ (see \cite[Theorem 14.10]{Ue75}). This proves (i).

Fix $p\in R$. Since $\theta\in {\rm Aut}(S)$ (resp. $h\in K$) fixes $R$ pointwise and is
anti-symplectic (resp. symplectic), the differential $d\theta_p\in {\rm GL}(T_{S,p})$ (resp. $d h_p\in {\rm GL}(T_{S,p})$) has eigenvalues $1,-1$ (resp. $1,1$). Since $\theta$ is in the center of ${\rm Aut}(S)$, $d h_p$ and $d \theta_p$ commute. This implies that $d h_p$ is the identity on the tangent space
$T_{S,p}$. Thus  (ii) holds. \end{proof}

In particular, in any local coordinates $(z,w)$ on $S$ with $R=\{w=0\}$,
both components of the local holomorphic map $h(z,w)-(z,w)$ are divisible by $w^2$. Recall that $\omega_S$ is a nonzero global holomorphic $2$-form on $S$. There are local coordinates $(z,w)$ on $S$ such that $R=\{w=0\}$ and
$\omega_S=dz\wedge dw$. Indeed, start with local coordinates $(z,v)$ satisfying $R=\{v=0\}$. Then $\omega_S=f(z,v)\,dz\wedge dv$ for some holomorphic function $f$, and we get desired local coordinates $(z,w)$ by replacing $v$ by
$w=\int_0^v f(z,t)\,dt$. In the sequel, we use $O(w^r)$ to denote a holomorphic function divisible
by $w^r$.

\begin{lemma}\label{lem:m}
    There is an integer $m\ge2$ such that the following two statements hold.
    \begin{enumerate}
        \item Every
$h\in K$ has the local form
\begin{equation}\label{eq:jet}
 h(z,w)=\bigl(z+A_h(z)w^m+O(w^{m+1}),\
                   w+O(w^{m+1})\bigr)
\end{equation} in coordinates satisfying $R=\{w=0\}$ and
$\omega_S=dz\wedge dw$;
        \item For at least one $h\in K$, the coefficient $A_h$ is not identically zero.
    \end{enumerate}
  
\end{lemma}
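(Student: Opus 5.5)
The idea is to let $m$ be the smallest order of contact with the identity along $R$ among nonidentity elements of $K$. Lemma~\ref{lem:Knontrivial}(i) guarantees such elements exist, and Lemma~\ref{lem:Knontrivial}(ii) gives $m\ge 2$. The symplectic condition then forces the normal component to vanish to one order higher than the tangential one.

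\textbf{Order of contact.} For $h\in K$ with $h\ne\id_S$, let $I_R$ be the ideal sheaf of $R$ and define the order of contact
\[
k_h:=\max\{k\ge 1 \mid h^*g-g\in I_R^k \text{ for all local functions } g\}.
\]
This number is intrinsic and does not depend on coordinates. It is finite: otherwise $h$ would agree with $\id_S$ to infinite order along $R$, hence $h=\id_S$ near $R$ and so everywhere, by the identity theorem on the connected surface $S$. Since $R$ is irreducible, the order of vanishing along $R$ is the same at every point of $R$. So in any local chart $(z,w)$ with $R=\{w=0\}$ we can write
\[
h(z,w)-(z,w)=\bigl(a(z)w^{k}+O(w^{k+1}),\ b(z)w^{k}+O(w^{k+1})\bigr),\qquad k=k_h,
\]
with $(a,b)$ not identically zero on the chart. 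Lemma~\ref{lem:Knontrivial}(ii) says $dh_p=\id$ for $p\in R$, so $k_h\ge 2$.

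\textbf{Symplectic condition.} Work in coordinates with $\omega_S=dz\wedge dw$. The condition $h^*\omega_S=\omega_S$ means the Jacobian determinant of $h$ equals $1$. The Jacobian has the form
\[
\begin{pmatrix} 1+O(w^k) & k a w^{k-1}+O(w^k)\\ O(w^k) & 1+k b w^{k-1}+O(w^k)\end{pmatrix}.
\]
Its determinant is therefore $1+k\,b(z)\,w^{k-1}+O(w^{k})$. Since $k\ge2$ and the determinant is $1$, we get $b\equiv0$. So the second component of $h$ is $w+O(w^{k+1})$, and $a\not\equiv0$.

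\textbf{Choice of $m$.} Set $m:=\min\{k_h\mid h\in K,\ h\ne\id_S\}$. This minimum exists by Lemma~\ref{lem:Knontrivial}(i), and $m\ge2$. Take any $h\in K$ with $h\ne\id_S$, so $k_h\ge m$.
\begin{itemize}
\item If $k_h=m$, the previous step gives \eqref{eq:jet} with $A_h=a\not\equiv0$.
\item If $k_h>m$, both components of $h-\id$ lie in $O(w^{m+1})$, so \eqref{eq:jet} holds with $A_h\equiv0$.
\end{itemize}
For $h=\id_S$, \eqref{eq:jet} holds trivially with $A_h=0$. This proves (i). Any $h$ attaining the minimum proves (ii).

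\textbf{Main point to check.} The only real check is the determinant computation: the $w^{k-1}$ term really comes from $\partial_w(b w^k)$, and no other entry contributes at order $w^{k-1}$. This holds because the off-diagonal product is $O(w^{k-1})\cdot O(w^k)$. The rest is formal.
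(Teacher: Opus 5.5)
Your proposal is correct and follows essentially the same route as the paper: you take $m$ to be the minimal order of displacement along $R$ over nonidentity elements of $K$, you get $m\ge 2$ from Lemma~\ref{lem:Knontrivial}, and you kill the normal coefficient by reading off the $w^{m-1}$ term of the Jacobian determinant, which must equal $1$ because $h^*\omega_S=\omega_S$. The only cosmetic difference is that the paper expands every $h$ at the common order $m$ (allowing $A_h$ and $B_h$ to vanish), whereas you expand each $h$ at its own order $k_h$ and then split into the cases $k_h=m$ and $k_h>m$.
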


\begin{proof}
    Let $\mathcal I_R$ be the ideal sheaf of $R$ in $S$. For a nonidentity
element $h\in K$, define its order of displacement along $R$ to be
the largest integer $r$ for which
\[
 h^*u-u\in\mathcal I_R^r
\]
for every local holomorphic function $u$ near $R$. Equivalently,
in local coordinates both components of $h-\id$ vanish to order at
least $r$ in $w$. The definition of the ideal sheaf shows that this condition
is independent of the coordinates. This order is finite: if all
orders vanished, the convergent local power series would give
$h=\id$ on a neighborhood of $R$, hence on the connected surface $S$. From this and Lemma \ref{lem:Knontrivial}, we have $m\ge2$. Every element of
$K$ is consequently the identity modulo $\mathcal I_R^m$, and some
element is not the identity modulo $\mathcal I_R^{m+1}$.

Choosing local coordinates $(z,w)$ with $R=\{w=0\}$ and
$\omega_S=dz\wedge dw$, we write
\[
 h(z,w)=\bigl(z+A_h(z)w^m+O(w^{m+1}),\
                   w+B_h(z)w^m+O(w^{m+1})\bigr).
\]
Since $h^*\omega_S=\omega_S$, the Jacobian determinant is identically
one. Its term of order $m-1$ in $w$ is particularly simple:
\[
 \det Dh(z,w)=1+mB_h(z)w^{m-1}+O(w^m).
\]
Thus $B_h=0$. This proves \eqref{eq:jet}. By the choice of $m$,
the tangential coefficient $A_h$ cannot vanish for every $h$.
\end{proof}

\begin{lemma}\label{lem:homJ}
    The leading coefficients $A_h$ in \eqref{eq:jet}
define a nonzero homomorphism
\begin{equation}\label{eq:jetmap}
 J:K\longrightarrow
 H^0(R,T_R\otimes N_{R/S}^{-m})
 \simeq H^0(\PP^1,T_{\PP^1}^{\otimes(m+1)}),
\end{equation}
where the target is regarded as an additive group. This
homomorphism is equivariant for conjugation by symplectic
automorphisms of $S$ preserving $R$.
\end{lemma}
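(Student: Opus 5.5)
The plan is to check four things in order: that $A_h$ is intrinsic, that $J$ is additive, that it is equivariant, and finally to identify the target line bundle. Nonvanishing is already Lemma~\ref{lem:m}(ii).

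\textbf{Intrinsic meaning of $A_h$.} By Lemma~\ref{lem:m}, every $h\in K$ is the identity modulo $\mathcal I_R^m$. So for any local function $u$ we have $h^*u-u\in\mathcal I_R^m$. The class of $h^*u-u$ in $\mathcal I_R^m/\mathcal I_R^{m+1}=N_{R/S}^{-m}$ depends only on $du|_R$ and is $\CC$-linear in it. It is also a derivation along $R$: writing $u\circ h-u$ to first order gives $du(h-\id)$. Moreover it kills $w$, because the $w$-component of $h-\id$ is $O(w^{m+1})$. Hence it factors through $\Omega_R$, and defines a section of $T_R\otimes N_{R/S}^{-m}$. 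In the coordinates of \eqref{eq:jet} this section is $A_h(z)\,\partial_z\otimes w^m$.

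A change of coordinates preserving $R=\{w=0\}$ and $\omega_S=dz\wedge dw$ has the form $z'=\zeta(z)+O(w)$ and $w'=w/\zeta'(z)+O(w^2)$. Under such a change, $A_h$ transforms exactly as a section of $T_R\otimes N^{-m}$. This also follows from the coordinate-free description above, so $J(h):=A_h\,\partial_z\otimes(dw)^{m}$ is well defined.

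\textbf{Homomorphism.} Compose $h(z,w)=(z+A_hw^m+\dots,\ w+O(w^{m+1}))$ with $g$ of the same form. The cross terms are of order $w^{2m-1}$ or higher, since $m\ge2$ gives $2m-1\ge m+1$. Hence $A_{h\circ g}=A_h+A_g$, and $J$ is additive.

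\textbf{Equivariance.} Take $\psi\in\Aut^s(S,R)$. Then $\psi K\psi^{-1}=K$, and the intrinsic description gives $J(\psi h\psi^{-1})=\psi_*J(h)$. Here $\psi_*$ is the natural action on sections of $T_R\otimes N_{R/S}^{-m}$ induced by $d\psi$.

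\textbf{Identification of the target.} Adjunction on the K3 surface gives $N_{R/S}\simeq K_R\simeq T_R^{-1}$. The form $\omega_S$ gives the residue isomorphism, which is compatible with $\omega_S=dz\wedge dw$. Therefore $T_R\otimes N^{-m}\simeq T_R^{\otimes(m+1)}$, which on $R\simeq\PP^1$ is $T_{\PP^1}^{\otimes(m+1)}\simeq\mathcal O(2m+2)$. Concretely, $A_h(z)(\partial_z)^{m+1}$ is a polynomial of degree at most $2m+2$ in the affine coordinate.

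The main obstacle is the bookkeeping in the coordinate independence. The cleanest route is to phrase everything through the map $u\mapsto h^*u-u \bmod \mathcal I_R^{m+1}$, rather than through explicit transition formulas.
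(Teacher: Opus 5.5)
Your proof is correct. The additivity check and the identification $N_{R/S}\simeq K_R$ via $\omega_S$ are the same as in the paper, but you handle coordinate independence and equivariance by a different route.

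The paper proves that $A_h(z)\,\partial_z\otimes w^m$ glues by an explicit computation under a change of adapted symplectic coordinates $z'=\varphi(z)+O(w)$, $w'=w/\varphi'(z)+O(w^2)$. It then states the conjugation rule as the explicit formula \eqref{eq:jetconjugation}.

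You instead realize $J(h)$ as the derivation $u\mapsto h^*u-u \bmod \mathcal I_R^{m+1}$ from $\mathcal O_S$ to $\mathcal I_R^m/\mathcal I_R^{m+1}$. This derivation kills $\mathcal I_R$ (this is exactly where $B_h=0$ from Lemma~\ref{lem:m} enters), hence is a section of $T_R\otimes N_{R/S}^{-m}$. Gluing is then automatic. Equivariance reduces to the identity $(\psi h\psi^{-1})^*u-u=(\psi^{-1})^*\bigl(h^*(\psi^*u)-\psi^*u\bigr)$, which holds for every automorphism of $S$ preserving $R$, not only symplectic ones.

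What your version does not deliver directly is the coordinate formula \eqref{eq:jetconjugation}, which is what Lemma~\ref{lem:JK} actually uses. To extract it from $\psi_*$ you must transport the action to $T_R^{\otimes(m+1)}$. That requires the $\omega_S$-identification $N_{R/S}\simeq K_R$ to commute with $\psi$, i.e.\ $\psi^*\omega_S=\omega_S$; otherwise an extra scalar factor appears. This is the only point where the symplectic hypothesis on the conjugating map matters, and it deserves one explicit sentence.

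Your bound $2m-1$ for the cross terms in the composition is weaker than the true $2m$, but it still suffices since $m\ge 2$.
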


\begin{proof}
    We first check additivity. If $h,k\in K$, substitution of their
expansions in \eqref{eq:jet} gives
\[
 (h\circ k)(z,w)
  =\bigl(z+(A_h(z)+A_k(z))w^m+O(w^{m+1}),\
          w+O(w^{m+1})\bigr).
\]
Terms coming from substituting $z+O(w^m)$ into $A_h(z)$ have order
at least $2m\ge m+1$ and do not affect this coefficient. Therefore
\[
 A_{h\circ k}=A_h+A_k,\qquad A_{h^{-1}}=-A_h.
\]

We next check that the coefficient has a global meaning. For two
adapted symplectic coordinate systems, the change of coordinates
along $R$ has the form
\[
 z'=\varphi(z)+O(w),\qquad
 w'=\frac{w}{\varphi'(z)}+O(w^2).
\]
The second formula follows by restricting
$dz'\wedge dw'=dz\wedge dw$ to $w=0$. Using \eqref{eq:jet}, the
leading tangential displacement in the new coordinates is
\[
 \varphi'(z)A_h(z)w^m
   =\varphi'(z)^{m+1}A_h(z)(w')^m
      \pmod{(w')^{m+1}}.
\]
Terms involving the normal displacement have higher order.
Consequently the local expressions
$A_h(z)(\partial/\partial z)\otimes(w^m)$ glue as sections of
$T_R\otimes(\mathcal I_R/\mathcal I_R^2)^{\otimes m}
=T_R\otimes N_{R/S}^{-m}$.

The symplectic form identifies $N_{R/S}$ with $T_R^*=K_R$:
a normal vector defines a covector on $T_R$ by pairing with
$\omega_S$. This explains the last isomorphism in \eqref{eq:jetmap}.
By Lemma \ref{lem:m} (ii), $J$ is nonzero.

The same calculation applies to conjugation. If
$g\in\Aut^s(S,R)$ and $\psi=g|_R$, then the coefficient of
$ghg^{-1}$, expressed in the coordinate $x$ on $R$, is
\begin{equation}\label{eq:jetconjugation}
 A_{ghg^{-1}}(x)
   =\bigl(\psi'(\psi^{-1}(x))\bigr)^{m+1}
       A_h(\psi^{-1}(x)).
\end{equation}
One first evaluates at the source point $\psi^{-1}(x)$; the
tangential differential contributes one factor of $\psi'$, and
the inverse normal differential contributes the other $m$ factors.
This proves the lemma.
\end{proof}

\begin{lemma}\label{lem:JK}
    The additive group $J(K)$, and hence $K$, is not finitely generated.
\end{lemma}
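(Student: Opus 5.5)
The plan is to show that $J(K)$, a subgroup of the finite-dimensional complex vector space $V:=H^0(\PP^1,T_{\PP^1}^{\otimes(m+1)})$, contains a copy of $\ZZ[1/2]$. Since $J(K)$ is a subgroup of a torsion-free abelian group, finite generation of $J(K)$ would make it a free abelian group of finite rank. Every subgroup of such a group is again finitely generated, while $\ZZ[1/2]$ is not, since it is infinitely $2$-divisible. Hence $J(K)$ is not finitely generated, and neither is $K$, because $J$ is a homomorphism onto $J(K)$ by Lemma~\ref{lem:homJ}.

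In the coordinate $x$ on $R=E_1$, an element of $V$ is $P(x)(\partial/\partial x)^{\otimes(m+1)}$ with $P$ a polynomial of degree at most $2(m+1)$. We write $A_h$ for this polynomial. Pick $h\in K$ with $A_h\neq 0$, which exists by Lemma~\ref{lem:m}(ii). Both $g=\iota_0\iota_1$ and $g=f_1$ lie in $\Aut^s(S,E_1)$ by Lemma~\ref{lem:maps}, so $ghg^{-1}\in K$, and \eqref{eq:jetconjugation} computes its jet.

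For $g=\iota_0\iota_1$ we have $\psi(x)=x+1$ and $\psi'=1$, so $A_{ghg^{-1}}(x)=A_h(x-1)$. Additivity of $J$ gives $J(ghg^{-1}h^{-1})=A_h(x-1)-A_h(x)$, a finite difference that lowers the degree by exactly one. Iterating, we obtain an element $k\in K$ whose jet is a nonzero constant $c$, that is, $J(k)=c\,(\partial/\partial x)^{\otimes(m+1)}$.

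For $g=f_1$ we have $\psi(x)=2x$ and $\psi'=2$, so \eqref{eq:jetconjugation} gives $A_{f_1kf_1^{-1}}=2^{m+1}c$. Similarly, $A_{f_1^{-1}kf_1}=2^{-(m+1)}c$. Thus $J(K)$ contains $2^{n(m+1)}c$ for all $n\in\ZZ$, and therefore contains the subgroup they generate, namely $c\,\ZZ[2^{-(m+1)}]=c\,\ZZ[1/2]$. This subgroup is not finitely generated, which completes the argument.

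The main obstacle is pinning down the correct transformation law for the constant term. The finite-difference step is clean because translation has derivative $1$. One must then check that the constant section $c(\partial/\partial x)^{\otimes(m+1)}$ really scales by $2^{\pm(m+1)}$ and not by some other factor. This depends on the normal derivative of $f_1$ along $E_1$ being $\psi'^{-1}$, which holds because of the symplectic identification $N_{R/S}\simeq K_R$ already used in \eqref{eq:jetconjugation}. As long as the exponent is nonzero, any factor of $2^{\pm(m+1)}$ yields $\ZZ[1/2]$.
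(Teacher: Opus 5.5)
Your proposal is correct and follows the paper's proof essentially step by step. You conjugate by $\iota_0\iota_1$ and take finite differences to reach a nonzero constant jet, then conjugate by $f_1^{\pm1}$ to get $c\,\ZZ[1/2]\subset J(K)$; the only difference is that you justify the final non-finite-generation step directly (subgroups of finitely generated abelian groups are finitely generated, and $K$ surjects onto $J(K)$), where the paper cites Proposition~2.5 of Dinh--Oguiso.
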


\begin{proof}
Put $k:=m+1$. In the affine coordinate $x$, a global section of
$T_{\PP^1}^{\otimes k}$ is written
$A(x)(\partial/\partial x)^{\otimes k}$. At infinity, use $u=1/x$.
The same section is
\[
 (-1)^k u^{2k}A(1/u)(\partial/\partial u)^{\otimes k}.
\]
Regularity at infinity is equivalent to $A$ being a polynomial of
degree at most $2k=2m+2$. Thus we identify the image
$\mathcal A:=J(K)$ with a subgroup of this polynomial space.

Recall the automorphisms $f_1$, $\iota_0\iota_1$ in Lemma \ref{lem:maps}. To simplify notation, put $a:=f_1$ and $b:=\iota_0\iota_1$. The automorphisms $a$ and $b$ normalize $K$, since both preserve $R$
and are symplectic. Applying \eqref{eq:jetconjugation} to their
restrictions $2x$ and $x+1$ gives
\begin{equation}\label{eq:jetactions}
 J(aha^{-1})(x)=2^{m+1}A_h(x/2),\qquad
 J(bhb^{-1})(x)=A_h(x-1).
\end{equation}
In particular, for a polynomial $A(x)\in \mathcal{A}$ of degree $d\ge 1$, the polynomial $A(x-1)-A(x)\in\mathcal{A}$ is nonzero and of degree $d-1$. From this and Lemma \ref{lem:homJ}, we infer that $\mathcal A$ contains a nonzero constant polynomial, say $\alpha$.

Conjugation by $a^{-1}$ sends $A(x)$ to
$2^{-(m+1)}A(2x)$. Repeatedly applying this to $\alpha$ yields
\[
 \frac{\alpha}{2^{n(m+1)}}\in\mathcal A\qquad(n\ge0).
\]
For any integer $r\ge0$, choose $n$ with $n(m+1)\ge r$ and multiply
this element by the integer $2^{n(m+1)-r}$. It follows that
\[
 \alpha\ZZ[1/2]\subset\mathcal A,\,\,\text{ where }\,\,
 \ZZ[1/2]=\{l/2^r\mid l\in\ZZ,\ r\ge0\}.
\]
Thus by \cite[Proposition 2.5]{DO}, $\mathcal A$ and $K$ are not finitely generated.
\end{proof}
Now we are ready to prove the desired property for $\Ine(T,C)$.
\begin{proposition}\label{prop:inertia}
The group $\Ine(T,C)$ is not finitely generated.
\end{proposition}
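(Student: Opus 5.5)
The plan is to compare $\Ine(T,C)$ with $K$ through the isomorphism \eqref{eq:quotient}, $\Aut(T)\simeq\Aut(S)/\langle\theta\rangle$, and to show that $\Ine(T,C)$ is isomorphic to $K$. Then Lemma~\ref{lem:JK} gives the result directly.

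\emph{Step 1: a map $K\to\Ine(T,C)$.} Every $h\in K$ commutes with the central involution $\theta$, so it descends to an automorphism $\bar h$ of $T$. Since $h$ preserves $R$ and is the identity on $R$, and $\pi|_R:R\to C$ is an isomorphism, $\bar h$ preserves $C$ and fixes it pointwise. Hence $h\mapsto\bar h$ is a homomorphism $K\to\Ine(T,C)$.

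\emph{Step 2: injectivity.} If $\bar h=\id_T$, then $h\in\langle\theta\rangle$. But $\theta$ is anti-symplectic and $h$ is symplectic, so $h=\id_S$.

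\emph{Step 3: surjectivity.} Let $g\in\Ine(T,C)$. By \eqref{eq:quotient} it has exactly two lifts to $S$, namely $\tilde g$ and $\tilde g\theta$. The lift $\tilde g$ preserves $\pi^{-1}(C)$. Because $C$ is the branch curve image of $R$, this preimage is $R$, so $\tilde g$ preserves $R$. Moreover $\tilde g|_R$ corresponds to $g|_C=\id_C$ under the isomorphism $\pi|_R$, so $\tilde g|_R=\id_R$. The same holds for $\tilde g\theta$, since $\theta$ fixes $R$ pointwise.

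Now $\tilde g^*\omega_S=\lambda\omega_S$ for some $\lambda\in\CC^*$, and it remains to check $\lambda=\pm1$. At a point $p\in R$, the differential $d\tilde g_p$ is the identity on $T_R$. Its normal eigenvalue $\mu$ equals $\lambda$. Since $\tilde g$ commutes with $\theta$, $d\tilde g_p$ commutes with $d\theta_p=\mathrm{diag}(1,-1)$, so $d\tilde g_p=\mathrm{diag}(1,\mu)$.

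This step is the main obstacle: showing that $\mu$ is a root of unity, and then $\pm1$. I would argue as follows. An automorphism of a K3 surface acts on $H^{2,0}$ by a root of unity, because the image of the transcendental representation is finite. So $\lambda$ is a root of unity. An automorphism of finite order $n$ fixing the curve $R$ pointwise acts on the normal direction by an $n$-th root of unity, and that is consistent with this.

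To pin down $\lambda=\pm1$, I would use the fact that the action on $H^{2,0}$ factors through $\Aut(T)$ together with the rationality of $T$. Alternatively, I would take $\tilde g^{\,n}$, which is symplectic and lies in $K$; note that $\theta$ has order $2$. Only a finite-index statement is then needed: a group containing $K$ (up to the order-two ambiguity) as a subgroup of finite index with finite quotient.

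Finite generation passes to finite-index subgroups. So if $\Ine(T,C)$ were finitely generated, the symplectic subgroup of its lifts, which is $\simeq K$, would have finite index (the index is bounded by the order of the finite group of roots of unity $\lambda$ acting on $H^{2,0}$) and would be finitely generated. This contradicts Lemma~\ref{lem:JK}. Choosing the symplectic lift among $\tilde g,\tilde g\theta$ whenever $\lambda=\pm1$ gives the precise isomorphism $K\simeq\Ine(T,C)$, but the finite-index argument suffices for the proposition.
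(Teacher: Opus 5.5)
Your final paragraph is the paper's proof. $K$ has finite index in $\Ine(S,R)$ because $\Aut(S)$ acts on $H^{2,0}(S)$ through a finite group; $\Ine(S,R)\to\Ine(T,C)$ is surjective with kernel $\langle\theta\rangle$, which meets $K$ trivially, so $K$ sits in $\Ine(T,C)$ as a finite-index subgroup and Lemma~\ref{lem:JK} applies. The detour aimed at forcing $\lambda=\pm1$ is unnecessary, and its suggested justification would not work: $\theta$ acts by $-1$ on $H^{2,0}(S)$, so that action does not factor through $\Aut(T)$, and $H^{2,0}(T)=0$ says nothing about $\lambda$.
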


\begin{proof}
    Since $S$ is a projective K3 surface, the canonical representation of its automorphism group is finite (see \cite[Theorem 14.10]{Ue75}). Thus $K$ is a subgroup of $\Ine(S,R)$ of finite index. Since $\pi|_R:R\to C$ is an isomorphism and
$\pi^{-1}(C)_{\mathrm{red}}=R$, the quotient homomorphism in
\eqref{eq:quotient} restricts to an exact sequence
\[
 1\longrightarrow\langle\theta\rangle
  \longrightarrow \Ine(S,R)\longrightarrow\Ine(T,C)\longrightarrow1.
\] Then $K$ is isomorphic to a subgroup of $\Ine(T,C)$ of finite index. Thus by Lemma \ref{lem:JK} and \cite[Proposition 2.5]{DO}, $\Ine(T,C)$ is not finitely generated.\end{proof}

\section{The complete family}\label{sec:family}

In this section, we study automorphisms of the complete family arising from letting the blowup center vary over the whole curve $C$ contained in the surface $T$. Let $i:C\hookrightarrow T$ be the inclusion map. Consider
\[
 Z:=C\times T,\qquad
 \Gamma_C:=\{(q,i(q))\mid q\in C\},\qquad
 \beta:Y:=\Bl_{\Gamma_C}Z\longrightarrow Z.
\]
We use ${\rm pr}_1$ and ${\rm pr}_2$ to denote the projections from $Z$ to $C$ and $T$ respectively. The first projection induces a smooth projective morphism
\[
 p:=\operatorname{pr}_1\circ\beta:Y\longrightarrow C,
 \qquad p^{-1}(q)=X_q:=\Bl_qT.
\]

Let $\Ecal$ be the exceptional divisor of $\beta$, and let $\Fcal$
denote the divisor class of a fiber of $p$. For $L\in\Pic(T)$, we
also write $L$ for $\beta^*\operatorname{pr}_2^*L$ on $Y$ and for
its restriction to a fiber. The product and blowup formulas for
Picard groups give
\begin{equation}\label{eq:pic}
 \Pic(Y)=\Pic(T)\oplus\ZZ\Ecal\oplus\ZZ\Fcal,\qquad
 \Pic(X_q)=\Pic(T)\oplus\ZZ e_q,
\end{equation}
where $e_q=\Ecal|_{X_q}$ is the class of the exceptional curve
$E_q:=\Ecal\cap X_q$.

We will prove Lemma \ref{lem:relative} which concerns automorphisms of the entire family over the fixed base $C$. The main point is to recognize the prime divisor $\Ecal$ from
intersection numbers on the threefold $Y$ (Lemma \ref{lem:Eunique}). Once $\Ecal$ is preserved,
we can descend an automorphism to $C\times T$ and get an element in $\Ine(T,C)$.

\begin{lemma}
The cubic intersection form on $\Pic(Y)$ is given by
\begin{equation}\label{eq:cubic}
 (L+s\Ecal+t\Fcal)^3
    =3t(L^2-s^2)-s^2(3L\cdot C-2s).
\end{equation}
\end{lemma}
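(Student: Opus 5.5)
We compute the triple intersection numbers of the generators $L\in\Pic(T)$, $\Ecal$ and $\Fcal$ one monomial at a time, and then expand the cube. The underlying facts are that $Y\to Z=C\times T$ is the blowup of the smooth curve $\Gamma_C\simeq C$ in a threefold, and that $\Fcal$ is the class of a fiber $X_q$.

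\emph{Monomials involving $\Fcal$.} Since $\Fcal$ is pulled back from the curve $C$, we have $\Fcal^2=0$. Hence every monomial with $\Fcal$ appearing at least twice vanishes. Monomials with exactly one $\Fcal$ are computed on the fiber $X_q=\Bl_qT$. There $L|_{X_q}$ is the pullback of $L$ and $\Ecal|_{X_q}=e_q$, so
\[
\Fcal\cdot L\cdot L'=L\cdot L',\qquad \Fcal\cdot L\cdot\Ecal=0,\qquad \Fcal\cdot\Ecal^2=e_q^2=-1.
\]

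\emph{Monomials in $L$ and $\Ecal$ only.} First, $L\cdot L'\cdot L''=0$, because these classes are pulled back from the surface $T$. Next, $L\cdot L'\cdot\Ecal=0$: the product $L\cdot L'$ is pulled back from a zero-cycle on $T$, hence is represented by finitely many fibers $C\times\{t\}$ pulled back to $Y$, and one may choose $t\notin C$, so that these curves miss $\Ecal$. For the remaining monomials we use the standard formulas for the blowup of a curve $\Gamma$ in a threefold, where $\Ecal=\PP(N)$ is a $\PP^1$-bundle over $\Gamma$ with $N=N_{\Gamma/Z}$ and tautological class $\xi=-\Ecal|_{\Ecal}$. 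These give
\[
D\cdot\Ecal^2=-D\cdot\Gamma \ \text{ for } D \text{ pulled back from } Z,\qquad \Ecal^3=-\deg N+2-2g(\Gamma).
\]
With $\Gamma=\Gamma_C$, we get $L\cdot\Ecal^2=-L\cdot\Gamma_C=-\deg(L|_C)=-L\cdot C$. The normal bundle satisfies $N_{\Gamma_C/Z}=T_C\oplus N_{C/T}$, since $\Gamma_C$ is the graph of the inclusion $C\hookrightarrow T$. Therefore $\deg N=2+C^2$, and since $g=0$,
\[
\Ecal^3=-(2+C^2)+2=-C^2=4.
\]
The sign conventions here ($\Ecal|_\Ecal=\mathcal O(-1)$ and $\xi^2-c_1\xi+c_2=0$) are where I expect errors. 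I would cross-check them on a simple case, such as the blowup of a line in $\PP^3$, where $\Ecal^3=-\deg N+2-2g=-2+2=0$ is known.

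\emph{Expansion.} Expanding $(L+s\Ecal+t\Fcal)^3$ and discarding the vanishing terms leaves
\[
3t\,L^2+3t s^2\,\Ecal^2\Fcal+3s^2\,L\Ecal^2+s^3\Ecal^3=3tL^2-3ts^2-3s^2\,L\cdot C-s^3C^2 .
\]
This matches the stated form $3t(L^2-s^2)-s^2(3L\cdot C-2s)$ only if $\Ecal^3=2$, i.e.\ only if $\deg N=0$. So a real obstacle is that I find $-C^2=4$ rather than $2$. I would recheck $\deg N$: the graph $\Gamma_C\subset C\times T$ has normal bundle $\mathrm{pr}_2^*T_T|_{\Gamma}$, of degree $-K_T\cdot C=2+C^2=-2$ by adjunction. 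Then $\Ecal^3=-\deg N+2=4$. The discrepancy presumably comes either from the paper's convention (for example, $C$ in the formula standing for $\Gamma_C$ with a different normalization) or from an arithmetic slip on one side. In the written proof I would present the general computation with $\Ecal^3=-\deg N_{\Gamma_C/Z}$, carefully verify $\deg N_{\Gamma_C/Z}$, and reconcile the result with the displayed constant.
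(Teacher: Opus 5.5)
\textbf{There is a genuine gap: your value of $\Ecal^3$ is wrong, so the proposal does not prove the statement.}

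Everything except $\Ecal^3$ is correct and agrees with the paper:
\begin{itemize}
\item the vanishing monomials $LL'L''=LL'\Ecal=L\Ecal\Fcal=0$;
\item $LL'\Fcal=L\cdot L'$ and $\Fcal\Ecal^2=-1$;
\item $L\Ecal^2=-L\cdot C$, obtained via $\beta_*(\Ecal^2)=-[\Gamma_C]$;
\item $\deg N_{\Gamma_C/Z}=2+C^2=-2$.
\end{itemize}

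The failing step is the formula $\Ecal^3=-\deg N+2-2g(\Gamma)$. For the blowup of a smooth curve $\Gamma$ in a smooth threefold, the correct formula is $\Ecal^3=-\deg N_{\Gamma/Z}$, with no genus term. Your own setup gives this directly. The conormal surjection $\pi^*N^{\vee}\to\mathcal O_{\Ecal}(-\Ecal)$ makes $\xi=-\Ecal|_{\Ecal}$ the tautological class of $\PP(N^{\vee})$. Since $c_2=0$ over a curve, the relation $\xi^2-c_1(N^{\vee})\xi=0$ gives $\Ecal^3=\xi^2=\deg N^{\vee}=-\deg N$.

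The term $2-2g$ appears only after rewriting with $\deg N_{\Gamma/Z}=-K_Z\cdot\Gamma+2g-2$, which gives $\Ecal^3=K_Z\cdot\Gamma+2-2g$. You have mixed the two expressions.

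Your proposed cross-check would have caught this, but the value you expect from it is also wrong. Take a line $\ell\subset\PP^3$. The linear system $|H-\Ecal|$ is base-point free and maps to $\PP^1$, so $(H-\Ecal)^3=0$. Together with $H^3=1$, $H^2\Ecal=0$ and $H\Ecal^2=-1$, this forces $\Ecal^3=-2=-\deg N_{\ell/\PP^3}$, not $0$.

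With the correct formula you get $\Ecal^3=2$. Equivalently, $K_Z\cdot\Gamma_C+2=(\deg K_C+K_T\cdot C)+2=(-2+2)+2=2$. Your expansion $3tL^2-3ts^2-3s^2L\cdot C+s^3\Ecal^3$ then becomes exactly $3t(L^2-s^2)-s^2(3L\cdot C-2s)$. This monomial-by-monomial computation, using $\beta_*(\Ecal^2)=-[\Gamma_C]$ and $\Ecal^3=-\deg N_{\Gamma_C/Z}=2$, is precisely the paper's proof.

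As submitted, however, your proposal concludes $\Ecal^3=4$ and asserts a discrepancy with the statement. It attributes that discrepancy to a convention or a slip in the paper, and neither is the case.
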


\begin{proof}
     Note that the normal bundle of the graph $\Gamma_C\subset Z$ is
$N_{\Gamma_C/Z}\simeq T_T|_C$. By adjunction and $C^2=-4$, we have $\deg N_{\Gamma_C/Z}=-K_T\cdot C=C^2+2=-2.$ For the blowup of a smooth curve in a smooth threefold, the
exceptional divisor satisfies
\[
 \beta_*(\Ecal^2)=-[\Gamma_C],\qquad
 \Ecal^3=-\deg N_{\Gamma_C/Z}.
\]
The projection formula now yields
\begin{equation}\label{eq:familyintersections}
 L\Ecal^2=-L\cdot C,\qquad
 \Fcal\Ecal^2=-1,\qquad
 \Ecal^3=2.
\end{equation}
For $L,L'\in\Pic(T)$ the remaining products are $LL'\Fcal=L\cdot L',
 LL'\Ecal=L\Ecal\Fcal=0,
 \Fcal^2=0,$
and the product of three classes pulled back from $T$ is zero.
This implies \eqref{eq:cubic}.\end{proof}

The divisor $\Ecal$ of $Y$ is characterized by the following

\begin{lemma}\label{lem:Eunique}
    \(\mathcal{E}\) is the unique prime divisor \(D \subset Y\) satisfying \(D^2\mathcal{F} = -1\) and \(D^3 > 0\).
\end{lemma}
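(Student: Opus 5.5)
The plan is to write a prime divisor as $D\sim L+s\Ecal+t\Fcal$ using \eqref{eq:pic} and compute both invariants with \eqref{eq:cubic}. Then I would show that every prime divisor other than $\Ecal$ with $D^2\Fcal=-1$ has $D^3\le 0$. The computation rests on the intersection numbers in the proof of \eqref{eq:cubic}: $LL'\Fcal=L\cdot L'$, $L\Ecal\Fcal=0$, $\Fcal^2=0$, $\Ecal^2\Fcal=-1$, and every other product is as listed there.

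\textbf{Step 1 (the numerics).} Expanding with these numbers gives
\[
D^2\Fcal=L^2-s^2 .
\]
For $D=\Ecal$ we have $L=0$, $s=1$, $t=0$, so $\Ecal^2\Fcal=-1$ and $\Ecal^3=2>0$ by \eqref{eq:familyintersections}. Now suppose $D\neq\Ecal$ and $L^2-s^2=-1$. Then \eqref{eq:cubic} becomes
\[
D^3=-3t-s^2\,(3L\cdot C-2s).
\]
So it suffices to prove $t\ge 0$, $s\le 0$ and $L\cdot C\ge -s$. These give $3L\cdot C-2s\ge 5|s|\ge 0$, hence $D^3\le 0$.

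\textbf{Step 2 (excluding special divisors).} If $D$ is vertical, then $D$ is a fiber $X_q$, whose class is $\Fcal$, so $D^2\Fcal=0\neq -1$. Next let $\Sigma$ be the strict transform of $C\times C=\operatorname{pr}_2^{-1}(C)$, which contains $\Gamma_C$ as its diagonal. Since $\Gamma_C$ is a smooth curve on the smooth surface $C\times C$, $\Sigma\sim C-\Ecal$. Thus $\Sigma$ has $L=C$, $s=-1$, $t=0$, and $\Sigma^2\Fcal=C^2-1=-5\neq -1$. So we may assume $D$ dominates $C$ and $D\neq\Ecal,\Sigma$.

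\textbf{Step 3 (the inequalities via test curves).} I would restrict to a general fiber $X_q$. There $D|_{X_q}\sim L+se_q$ is an effective divisor containing neither $E_q$ nor the strict transform $\widetilde C_q=\Sigma\cap X_q$, whose class is $C-e_q$. For general $q$ this holds because $D\neq \Ecal,\Sigma$ are prime. Intersecting with $E_q$ gives
\[
0\le (L+se_q)\cdot e_q=-s .
\]
Intersecting with $\widetilde C_q$ gives $0\le (L+se_q)\cdot(C-e_q)=L\cdot C+s$. For $t$, I would use the horizontal curves $C\times\{y\}$ with $y\in T\setminus C$. These are disjoint from $\Gamma_C$, so their lifts meet $\Ecal$ trivially, meet pullbacks from $T$ trivially, and meet $\Fcal$ once. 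Hence $D\cdot(C\times\{y\})=t$. These curves cover a dense open set of $Y$, so a general one is not contained in $D$, and $t\ge 0$.

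\textbf{Main obstacle.} The delicate point is choosing test curves that certainly do not lie in $D$. This is why $\Sigma$, the only prime divisor containing $\widetilde C_q$ for general $q$, has to be handled separately in Step 2. With that settled, Step 1 yields $D^3\le 0$, which proves uniqueness.
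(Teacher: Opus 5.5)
Your proposal is correct and follows essentially the same route as the paper: the same decomposition $D\sim L+s\Ecal+t\Fcal$, the same separate exclusion of the strict transform of $C\times C$ (which has $D^2\Fcal=-5$), and the same sign conditions substituted into \eqref{eq:cubic} to get $D^3\le 0$. The only difference is minor: you get $s\le 0$ and $L\cdot C\ge -s$ by intersecting with $E_q$ and $\widetilde C_q$ on a general fiber, whereas the paper reads off $s=-m$ from the multiplicity of $\beta(D)$ along $\Gamma_C$ and obtains the weaker but sufficient bound $L\cdot C\ge 0$ by restricting $\beta(D)$ to $\{q\}\times C$ in $Z$.
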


\begin{proof}
    By \eqref{eq:familyintersections}, $\Ecal$ satisfies the required properties. Next we prove the uniqueness. Suppose $D\neq \Ecal$ be a prime divisor of $Y$ satisfying \(D^2\mathcal{F} = -1\) and \(D^3 > 0\). Then $B:=\beta(D)$ is a prime divisor in $Z=C\times T$ and $D$ is the proper transform of $B$. Using the decomposition $\Pic(C\times T)\cong\Pic(C)\oplus \Pic(T)$, we may write
\begin{equation}\label{eq:Bdecomp}
    B \sim  b \mathrm{pr}_1^*[q_0]+ \mathrm{pr}_2^* L,
\end{equation}
 where $L\in \Pic(T)$, $b\in \ZZ$, and $q_0\in C$ is a closed point. Then \[D \sim L - m\mathcal{E} + b \mathcal{F},\] where $m\ge 0$ is the multiplicity of $B$ along the blowup center $\Gamma_C$. We may choose a general point $u\in T$ such that $C\times \{u\}$ is not contained in $B$. Then by \eqref{eq:Bdecomp}, $b={\rm deg}(\mathcal{O}_Z(B)|_{C\times \{u\}})\ge 0$. 
 
 We claim that the intersection number $L.C\ge 0$. Consider the prime divisor
\[
C \times C \subset C \times T,
\]
where the second \(C\) denotes its image in \(T\) under $i$. First,
\(
B \neq C \times C.
\)
Indeed, if equality held, \(D\) would be the strict transform of \(C \times C\), so
\(
D \sim C - \mathcal{E}
\) since the multiplicity along the diagonal graph is one.  Then we have $L=C$ and 
\[
D^2\mathcal{F} =(L-\Ecal)^2 \Fcal=L^2 \Fcal-2L\Ecal\Fcal+\Ecal^2\Fcal= C^2 - 1 = -5,
\]
contradicting to $D^2\Fcal=-1$. So $B\neq C\times C$. Hence for a general point \(q \in C\), the curve
\(
\{q\} \times C
\)
is not contained in \(B\). Restricting \(B\) to this curve therefore gives an effective divisor whose degree is
\(
L \cdot C.
\)
Thus we have \(L \cdot C \geq 0\). This proves the claim.

By $D^2\Fcal=-1$, we have \[
-1=(L-m\Ecal+b\Fcal)^2\Fcal=L^2 - m^2.
\] Substitute \(s = -m\) and \(t = b\) into the cubic form \eqref{eq:cubic}, we get by $m \ge 0$ and $b \ge 0$
$$ 0< D^3= 3b(L^2 - m^2) - 3m^2(L \cdot C) - 2m^3= -3b - 3m^2(L \cdot C) - 2m^3 \le 0,$$
a contradiction. This completes the proof of the lemma.
\end{proof}

\begin{lemma}\label{lem:relative}
Every automorphism of $Y$ over $C$ preserves $\Ecal$.
Moreover, lifting induces an isomorphism
\[
 \Ine(T,C)\simeq\Aut_C(Y).
\]
\end{lemma}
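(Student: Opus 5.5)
Let $g\in\Aut_C(Y)$, i.e. $p\circ g=p$. The plan is first to show that $g$ preserves $\Ecal$ using Lemma~\ref{lem:Eunique}, and then to descend. Since $g$ commutes with $p$, we have $g^*\Fcal=\Fcal$: the pullback of a fiber $p^{-1}(q)$ is again $p^{-1}(q)$. The divisor $D:=g(\Ecal)$ is a prime divisor because $g$ is an automorphism. Intersection numbers on the smooth projective threefold $Y$ are invariant under automorphisms, so
\[
 D^2\Fcal=(g_*\Ecal)^2\,(g_*\Fcal)=\Ecal^2\Fcal=-1,\qquad D^3=\Ecal^3=2>0 ,
\]
using \eqref{eq:familyintersections}. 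Lemma~\ref{lem:Eunique} then gives $g(\Ecal)=\Ecal$.

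Next, descend. Since $g$ preserves $\Ecal$, which is the exceptional divisor of the blowdown $\beta$, the contraction $\beta$ is $g$-equivariant. Concretely, $g$ maps each fiber of $\Ecal\to\Gamma_C$ (a $(-1)$-curve $E_q$ in $X_q$) to a contractible curve in the same fiber $X_q$ meeting $\Ecal$, and this curve must be $E_q$. So $g$ induces a birational self-map of $Z$ that is regular away from $\Gamma_C$. By Zariski's main theorem it is an automorphism $\bar g$ of $Z=C\times T$ over $C$ preserving $\Gamma_C$. Alternatively, one can argue fiberwise: on each $X_q$, $g|_{X_q}$ preserves $E_q$, so by Castelnuovo contraction it descends to $g_q\in\Aut(T)$ fixing $q$.

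The key step is to show that $g_q$ is independent of $q$. The family $q\mapsto g_q$ is a morphism from $C$ to $\Aut(T)$, since it is given by $\bar g$ on $C\times T$. By Lemma~\ref{lem:faithful}, $\Aut(T)$ acts faithfully on the discrete lattice $\Pic(T)$. Moreover $\Aut^0(T)$ is trivial: a connected group acts trivially on $\Pic(T)$, and faithfulness then forces it to be trivial. Hence $\Aut(T)$ is discrete, and since $C$ is connected, $g_q=h$ for a fixed $h\in\Aut(T)$. Thus $\bar g=\id_C\times h$. The condition that $\bar g$ preserves $\Gamma_C$ says $h(q)=q$ for all $q\in C$, so $h\in\Ine(T,C)$.

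Conversely, for $h\in\Ine(T,C)$, the map $\id_C\times h$ preserves $\Gamma_C$ pointwise, so it lifts to $\Bl_{\Gamma_C}Z=Y$ over $C$. These two constructions are mutually inverse homomorphisms. The lift is injective because $Y\to Z$ is birational. The descent is injective because an automorphism of $Y$ is determined by its action on the dense open complement of $\Ecal$.

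The main obstacle is the middle step, namely making the descent rigorous in families and justifying the constancy of $g_q$. I would handle it by noting that $\bar g$ is an automorphism of $C\times T$ over $C$, which is the same as a morphism $C\to\Aut(T)$. Discreteness of $\Aut(T)$ then follows from the faithfulness in Lemma~\ref{lem:faithful} together with the rigidity of the Picard action, which is locally constant in $q$.
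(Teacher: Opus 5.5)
Your proposal is correct and follows the paper's proof: Lemma~\ref{lem:Eunique} gives $g(\Ecal)=\Ecal$, hence $g(E_q)=E_q$ for every $q$, and this is what lets Zariski's main theorem (the paper phrases it via the graph $Z'$) descend $g$ to an automorphism of $Z=C\times T$ over $C$ fixing $\Gamma_C$. Note that regularity away from $\Gamma_C$ alone would not suffice; it is the preservation of the $\beta$-fibers that makes the descent work. The only real difference is the final step. The paper cites \cite[Lemma~3.1]{DOY2} to write the descended map as $\id_C\times f'$, whereas you prove this directly from the triviality of $\Aut^0(T)$ (via Lemma~\ref{lem:faithful}) and the connectedness of $C$, which is a valid self-contained substitute.
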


\begin{proof}
Let $f\in\Aut_C(Y)$. Since $f$ preserves the class $\Fcal$ and the cubic intersection form on $\Pic(Y)$, the prime divisor $f(\Ecal)=\Ecal$ by Lemma \ref{lem:Eunique}. Let $Z'$ be the (reduced) image of the morphism
\[
 Y\longrightarrow Z\times Z,\qquad y\mapsto (\beta(y),\beta(f(y))),
\]
and let $r_1,r_2:Z'\to Z$ be the two projections. Clearly the
first projection is projective and birational. Moreover, for any $z\in Z$, the preimage $r_1^{-1}(z)$ consists of only one point since $f\in\Aut_C(Y)$ and $f(\Ecal)=\Ecal$. Thus $r_1$ is quasi-finite, which implies $r_1$ is a finite map since it is projective. Then $r_1$ is an isomorphism since it is birational and $Z$ is normal. Note that the morphism
\(
 g:=r_2\circ r_1^{-1}:Z\longrightarrow Z
\)
is an automorphism of $Z$ satisfying $g\beta=\beta f$. Since $g$ fixes $\Gamma_C$ pointwise, by \cite[Lemma 3.1]{DOY2}, we infer that $g={\rm id}_C\times f'$ for some $f'\in {\rm Ine}(T,C)$. This implies that lifting induces $\Ine(T,C)\simeq\Aut_C(Y)$. \end{proof}

\section{Proof of Theorem~\ref{thm:main}}\label{sec:proof}

Let $\eta$ be the generic point of $C$, write $k:=\CC(C)=\CC(t)$
with $t=x(\eta)$, and fix an algebraic closure $\bar k$ of $k$. From $p=\operatorname{pr}_1\circ\beta:Y\longrightarrow C$ and base change, we obtain
 $$X_\eta:= Y\times_C \Spec k\qquad \text{and} \qquad X_{\bar \eta}:= Y\times_C \Spec \bar k.$$
Note that the $\Gamma_C$ can be viewed as a section of the projection ${\rm pr}_1: Z=C\times T\rightarrow C$, and this gives a $k$-rational point $q_\eta$ of $T_k:=T\times_{\Spec\CC} \Spec k=Z\times_C \Spec k$. Since blowing up commutes with flat base change, we have $$X_\eta=\Bl_{q_\eta} T_k \qquad\text{and}\qquad X_{\bar\eta}=\Bl_{q_{\eta}}  T_{\bar k}.$$ Consider the group
\[
 G:=\Aut_{\bar k}(X_{\bar\eta}).
\]
Every element of $G$ is defined over $k$. Indeed, the geometric
Picard group $\Pic(X_{\bar\eta})$ is generated by the classes in \eqref{eq:pic}, all
defined over $k$. Thus an automorphism and each of its Galois
conjugates have the same Picard action. They coincide by
Lemma~\ref{lem:faithful}. Each automorphism is defined over a finite
extension of $k$, so Galois descent gives
\begin{equation}\label{eq:descent}
 G=\Aut_k(X_\eta).
\end{equation}

By spreading out over the curve $C$, for any $g\in G$, there exists a largest Zariski open dense subset $U_g\subset C$ such that $g$ extends to an automorphism of $p^{-1}(U_g)$ over $U_g$, and  we define $\mathcal B(g)$ to be the complement of $U_g$ in $C$. Then for every $g\in G$, $\mathcal B(g)$ is a finite set.  We have
\begin{equation}\label{eq:bad}
 \mathcal B(gh)\subset\mathcal B(g)\cup\mathcal B(h),\qquad
 \mathcal B(g^{-1})=\mathcal B(g).
\end{equation}

\begin{proposition}\label{prop:generic}
The group $G$ is not finitely generated.
\end{proposition}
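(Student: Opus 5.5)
We argue by contradiction, following the outline in the introduction. Suppose $G$ is generated by finitely many elements $g_1,\dots,g_N$. Put
\[
 \mathcal B:=\bigcup_{j=1}^N \mathcal B(g_j),
\]
a finite subset of $C$. By \eqref{eq:bad}, every $g\in G$, being a word in the $g_j^{\pm1}$, satisfies $\mathcal B(g)\subset\mathcal B$.

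The first step is to show that $\mathcal B$ is invariant under the three automorphisms $\sigma$ of $C$ from Lemma~\ref{lem:maps}. Each such $\sigma$ is the restriction of some $\tilde\sigma\in\Aut(T,C)$. It induces an automorphism $\Sigma:=\sigma\times\tilde\sigma$ of $Z=C\times T$ preserving $\Gamma_C$, so $\Sigma$ lifts to an automorphism of $Y$ covering $\sigma$ on $C$. Conjugation by $\Sigma$ maps $X_\eta$ to its base change along $\sigma^*:k\to k$. This defines a group automorphism $g\mapsto g^\sigma$ of $G=\Aut_k(X_\eta)$, using \eqref{eq:descent}.

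If $g$ extends over $U_g$, then $\Sigma g\Sigma^{-1}$ extends over $\sigma(U_g)$. The maximality in the definition gives $\mathcal B(g^\sigma)=\sigma(\mathcal B(g))$. Since $g\mapsto g^\sigma$ is a bijection of $G$, we get
\[
 \sigma(\mathcal B)=\sigma\Big(\bigcup_{g\in G}\mathcal B(g)\Big)=\bigcup_{g\in G}\mathcal B(g^\sigma)=\bigcup_{g\in G}\mathcal B(g).
\]
Here we replace $\mathcal B$ by the union $\bigcup_{g\in G}\mathcal B(g)$, which is contained in the original $\mathcal B$ and hence finite. This union is then $\sigma$-invariant. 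By the last assertion of Lemma~\ref{lem:maps} it must be empty.

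Consequently every $g\in G$ extends to an automorphism of $Y$ over $C$, since $U_g=C$. This gives a map $G\to\Aut_C(Y)$. It is a homomorphism, because extensions are unique: $Y$ is separated and $X_\eta$ is dense in $Y$. Conversely, restriction to the generic fiber gives $\Aut_C(Y)\to G$. The two maps are mutually inverse, so $G\simeq\Aut_C(Y)$. By Lemma~\ref{lem:relative} we get $G\simeq\Ine(T,C)$. Then $\Ine(T,C)$ would be finitely generated, contradicting Proposition~\ref{prop:inertia}.

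The main obstacle is the equivariance step, namely verifying $\mathcal B(g^\sigma)=\sigma(\mathcal B(g))$ for the twisted conjugate $g^\sigma$. This is where one must check that the automorphism $\tilde\sigma$ of $T$ actually moves the blowup center along $C$ compatibly with $\sigma$, which holds because $\tilde\sigma|_C=\sigma$. One must also check that maximal domains of extension are transported by $\Sigma$. Both are formal consequences of $\Sigma$ being an automorphism of $Y$ over $\sigma$, but the bookkeeping with the Galois-twisted action on $G$ requires care.
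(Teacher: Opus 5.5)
Your proposal is correct and follows the paper's proof step for step: finiteness of $\mathcal B$ from a finite generating set and \eqref{eq:bad}, invariance of $\mathcal B$ under the lifts of the three maps of Lemma~\ref{lem:maps} via $\mathcal B(\widetilde n g\widetilde n^{-1})=n|_C(\mathcal B(g))$, hence $\mathcal B=\varnothing$, and then $G=\Aut_C(Y)\simeq\Ine(T,C)$ by Lemma~\ref{lem:relative}, contradicting Proposition~\ref{prop:inertia}. One cosmetic point: no ``replacement'' of $\mathcal B$ is needed, because by \eqref{eq:bad} the union over all of $G$ already equals the union over the generators.
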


\begin{proof}
Suppose the proposition is not true. Formula~\eqref{eq:bad} implies that
\[
 \mathcal B:=\bigcup_{g\in G}\mathcal B(g)
\]
is finite, since it is contained in the union of the bad sets of a
finite generating set.

For $n\in\Aut(T,C)$, the map
$(q,t)\mapsto(n(q),n(t))$ preserves $\Gamma_C$ and lifts to an
automorphism $\widetilde n$ of $Y$ covering $n|_C$. It acts on $G$
by conjugation, by \eqref{eq:descent} and
\[
 \mathcal B(\widetilde n g\widetilde n^{-1})
       =n|_C\bigl(\mathcal B(g)\bigr).
\]
The conjugate is a rational map over $C$, although
$\widetilde n$ itself may act nontrivially on the base.
Thus $\mathcal B$ is invariant under the descended maps ${(f_1)}_T,(\iota_0\iota_1)_T,\phi_T$.
Lemma~\ref{lem:maps} gives $\mathcal B=\varnothing$.

All elements of $G$ therefore extend over $C$. The extensions are
unique, so Lemma~\ref{lem:relative} gives
\[
 G=\Aut_C(Y)\simeq\Ine(T,C).
\]
This contradicts Proposition~\ref{prop:inertia}.
\end{proof}

\begin{proof}[Proof of Theorem~\ref{thm:main}]
Let $q\in C$ with $\tau=x(q)$ being transcendental over $\QQ$.
The algebraically closed fields $\bar k=\overline{\CC(t)}$
and $\CC$ have the same infinite transcendence degree over $\QQ$.
Choose transcendence bases containing $t$ and $\tau$, respectively,
and a bijection sending $t$ to $\tau$. We extend the resulting
isomorphism to their algebraic closures and get a field isomorphism
\[
 \sigma:\bar k\longrightarrow\CC,\qquad
 \sigma|_{\QQ}=\id,\qquad \sigma(t)=\tau.
\]
The family $Y\to C$, including the coordinate $x$, is defined over
$\QQ$. Transport of scalars by $\sigma$ consequently identifies
$X_{\bar\eta}$ with $X_q$ and therefore induces an isomorphism of abstract
groups
\[
 G\simeq\Aut(X_q).
\]
This use of $\sigma$ does not require it to fix the original copy
of $\CC$ in $\bar k$. By Proposition~\ref{prop:generic}, $\Aut(X_q)$ is not finitely generated. By Lemma~\ref{lem:faithful}, $\Aut(X_q)$ is discrete. This completes the proof.
\end{proof}

\begin{remark}
The proof uses descent of generic automorphisms to $T$ only after
assuming finite generation of $G$. The final change of ground field
gives an abstract group isomorphism; it does not assert that every
generic automorphism extends to the chosen complex fiber.
\end{remark}

\end{document}